\newcommand{\FF}{\mathbb{F}}
\newcommand{\GG}{\mathbb{G}}
\newcommand{\NN}{\mathbb{N}}
\newcommand{\ZZ}{\mathbb{Z}}
\newcommand{\BB}{\mathscr{B}}
\newcommand{\KK}{\mathscr{K}}
\newcommand{\cC}{\mathcal{C}}
\newcommand{\cE}{\mathcal{E}}
\newcommand{\cL}{\mathcal{L}}
\newcommand{\cG}{\mathcal{G}}
\newcommand{\cM}{\mathcal{M}}
\newcommand{\cN}{\mathcal{N}}
\newcommand{\cU}{\mathcal{U}}
\newcommand{\cX}{\mathcal{X}}
\newcommand{\fg}{\mathfrak{g}}
\newcommand{\fn}{\mathfrak{n}}
\newcommand{\fu}{\mathfrak{u}}
\newcommand{\lra}{\longrightarrow}
\DeclareMathOperator{\ad}{ad}
\DeclareMathOperator{\Char}{char}
\DeclareMathOperator{\cx}{cx}
\DeclareMathOperator{\Der}{Der}
\DeclareMathOperator{\Kr}{Kr}
\DeclareMathOperator{\Ext}{Ext}
\DeclareMathOperator{\HH}{H}
\DeclareMathOperator{\rad}{rad}
\DeclareMathOperator{\Spec}{Spec}
\DeclareMathOperator{\IDer}{IDer}
\numberwithin{equation}{section}
\newtheorem{Theorem}{Theorem}[section]
\newtheorem{Lemma}[Theorem]{Lemma}
\newtheorem{Corollary}[Theorem]{Corollary}
\newtheorem{Proposition}[Theorem]{Proposition}
\theoremstyle{Theorem}
\newtheorem*{thm*}{Theorem}
\newtheorem*{thm**}{Corollary}
\newtheorem*{thm***}{Theorem B}
\theoremstyle{remark}
\newtheorem*{Remark}{Remark}
\numberwithin{equation}{section}
\begin{document}

\title[First Hochschild]{On the first Hochschild cohomology of cocommutative Hopf algebras of finite representation type}
\author[Hao Chang]{Hao Chang}
\address[Hao Chang]{School of Mathematics and Statistics,
Central China Normal University, 430079 Wuhan, People's Republic of China}
\email{chang@mail.ccnu.edu.cn}
\date{\today}
\begin{abstract}
Let $\BB_0(\cG)\subseteq k\cG$ be the principal block algebra of the group algebra $k\cG$ of an infinitesimal group scheme $\cG$ over an algebraically closed field
$k$ of characteristic $\Char(k)=:p\geq 3$.
We calculate the restricted Lie algebra structure of the first Hochschild cohomology $\cL:=\HH^1(\BB_0(\cG),\BB_0(\cG))$ whenever $\BB_0(\cG)$ has finite representation type.
As a consequence, we prove that the complexity of the trivial $\cG$-module $k$ coincides with the maximal toral rank of $\cL$.
\end{abstract}
\maketitle

\section*{Introduction}
Let $k$ be an algebraically closed field and $A$ be a finite-dimensional $k$-algebra.
The first Hochschild cohomology group $\cL:=\HH^1(A,A)$ can be identified with the space of outer derivations of $A$,
and the commutator of derivations endow this space with a Lie algebra structure.
The Lie structure of $\cL$ has been studied in several recent article, see for instance \cite{LR2}, \cite{RSS}, \cite{ER}.

Suppose that the field $k$ is of odd prime characteristic $p$.
For an infinitesimal group scheme $\cG$,
we will be interested in the representation theory of the principal block algebra $\BB_0(\cG)$ of the group algebra $k\cG$.
The main motivation for the note is that the Lie algebra $\HH^1(\BB_0(\cG),\BB_0(\cG))$ has the $p$-restricted structure \cite{Zimm}.
The purpose of this paper is to describe the close connections between the representations of $\BB_0(\cG)$ and the restricted Lie algebra structure of $\HH^1(\BB_0(\cG),\BB_0(\cG))$. We denote by $\cx_\cG(k)$ the complexity of the trivial $\cG$-module $k$.

\begin{thm*}
Let $\cG$ be an infinitesimal group scheme of finite representation type.
Then the restricted Lie algebra $\cL:=\HH^1(\BB_0(\cG),\BB_0(\cG))$ is trigonalizable,
and $\cx_\cG(k)=1$ coincides with the maximal toral rank of $\cL$.
\end{thm*}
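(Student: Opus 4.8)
The plan is to reduce everything to an explicit algebra and then read off the restricted structure from a grading. First I would invoke the classification of finite-type principal blocks: for an infinitesimal $\cG$ with $\BB_0(\cG)$ of finite representation type, the basic algebra of $\BB_0(\cG)$ is a self-injective Nakayama algebra $\cN$, namely the path algebra of a cyclic quiver on $e$ vertices modulo the ideal generated by the paths of a fixed length $\ell$. Since $\HH^1$, together with its Lie bracket and its restricted operation $D\mapsto D^{p}$, is a Morita invariant, it suffices to analyse $\cL=\HH^1(\cN,\cN)$. At the same time $\BB_0(\cG)$ is non-semisimple of finite type, so the trivial module is $\Omega$-periodic; this gives $\cx_\cG(k)=1$ at once, and the remaining task is to show that $\cL$ is trigonalizable and that its maximal toral rank is also $1$.

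Next I would put the length grading $\cN=\bigoplus_{d\ge 0}\cN_d$ to work. It induces a grading $\cL=\bigoplus_{m\ge 0}\cL_{(m)}$ in which a homogeneous outer derivation sends each arrow $i\to i+1$ to a path $i\to i+1$. When $e\ge 2$ every such path has length $\ge 1$, so $\cL$ is concentrated in non-negative degrees; this is the \emph{decisive} point, since it forbids any degree-lowering derivation. The degree-zero part is spanned by the derivations $a_i\mapsto\alpha_i a_i$, and modulo the inner derivations $\ad(\sum_i c_i e_i)$ — which realise precisely the tuples $(\alpha_i)$ with $\sum_i\alpha_i=0$ — it reduces to the class of the Euler derivation $D_0$. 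That class is nonzero exactly when $\Char(k)=p\nmid e$, and since $D_0^{p}=D_0$ (the $p$-th power being the restricted operation) it is a nonzero toral element; thus $\cL_{(0)}$ is a one-dimensional torus.

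Then I would assemble the restricted structure. For $m\ge 1$ the space $\cL_{(m)}$ consists of positive-degree, hence $p$-nilpotent, derivations, and $\cL_{(\ge 1)}=\bigoplus_{m\ge 1}\cL_{(m)}$ is a nilpotent ideal, so $\cL=\langle D_0\rangle\ltimes\cL_{(\ge 1)}$ is solvable. The relation $[D_0,D]=(\deg D)\,D$ shows that the torus $\langle D_0\rangle$ acts on $\cL_{(m)}$ through the scalar $me\in\FF_p$, so all toral weights lie in the prime field. By the standard criterion, a solvable restricted Lie algebra whose maximal torus acts with weights in $\FF_p$ is trigonalizable, which gives the first assertion. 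Finally, any torus of $\cL$ meets the $p$-nil ideal $\cL_{(\ge 1)}$ trivially and therefore embeds into $\cL_{(0)}=\langle D_0\rangle$; hence the maximal toral rank of $\cL$ equals $1=\cx_\cG(k)$.

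The main obstacle is the second step: computing $\HH^1(\cN,\cN)$ with its $p$-operation and proving that it lives in non-negative degree with a one-dimensional semisimple part. This is exactly what rules out a simple, Witt-type summand and forces trigonalizability, and it is where the precise shape of $\cN$ enters — in particular the divisibility $p\nmid e$ (needed for the torus $\langle D_0\rangle$ to survive in cohomology) and the separate, delicate treatment of the local case $e=1$, where a degree-lowering derivation can appear. The classification of $\BB_0(\cG)$ as a Nakayama algebra is the other indispensable input.
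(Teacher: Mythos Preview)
Your overall strategy coincides with the paper's: both recognise that $\BB_0(\cG)\cong k\cG'$ with $\cG'=\cU\rtimes\GG_{m(r)}$ is a basic self-injective Nakayama algebra, impose the path-length grading, and aim to show that $\cL$ splits as a one-dimensional degree-zero torus acting on a $p$-nilpotent positively graded ideal. The paper does this concretely by writing $k\cG'=k\cU\#k\GG_{m(r)}$ with primitive idempotents $u_\lambda$ ($\lambda\in\cX(\GG_{m(r)})$) and a weight-$\alpha$ generator $x$, and exhibiting a basis $\{g_{0,j}\}$ of $\cL$ with $[g_{0,i},g_{0,j}]=(j-i)g_{0,i+j}$ and $g_{0,0}^{[p]}=g_{0,0}$.

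There is, however, a genuine gap in your execution of the degree-zero step. The number of vertices of the cyclic quiver is $e=|\cX(\GG_{m(r)})|=p^{r}$, so in the non-local case $p\mid e$, contrary to the divisibility you invoke. Hence your Euler derivation $D_0$ is \emph{inner} (its coefficient tuple sums to $e=0$ in $k$), and indeed your own formula $[D_0,D]=(\deg D)\,D$ with $\deg D\in e\ZZ=p^{r}\ZZ$ already shows $D_0$ acts trivially on all of $\cL$. Thus $\langle D_0\rangle$ cannot supply the torus in $\cL_{(0)}$, and the trigonalizability argument breaks at precisely the point you flagged as the main obstacle. The paper circumvents this by taking as degree-zero generator not the Euler derivation but $g_{0,0}$, the derivation that scales a \emph{single} arrow by $1$ and annihilates the others: its coefficient sum is $1\ne 0$ in $k$, one checks $g_{0,0}^{p}=g_{0,0}$ directly, and $[g_{0,0},g_{0,j}]=j\,g_{0,j}$ gives weights in $\FF_p$. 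Your framework is salvageable with exactly this replacement --- $\cL_{(0)}\cong k^{e}/\{\text{sum-zero tuples}\}\cong k$ is always one-dimensional, and a single-arrow rescaling shows it is toral irrespective of whether $p\mid e$ --- but as written the argument does not go through. Your caution about the local case $e=1$ is well placed: there the degree-$(-1)$ derivation $\partial/\partial x$ survives, $\cL$ acquires a Witt-type quotient (cf.\ the paper's Proposition~\ref{abelian unipotent group} and Lemma~\ref{h1 of kx/x^p^n}), and solvability genuinely fails.
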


For the reduced group scheme, the first Hochschild cohomology of the block algebra has been studied in \cite{LR1}.
They showed that the only restricted simple Lie algebra occurs as $\HH^1(B,B)$ of some block algebra $B$ of a finite group is
the Jocobson-Witt algebra.

After recalling basic definitions on the finite group schemes, the first Hochschild cohomology and the representation type.
We show in Section 2 that the Lie algebra $\HH^1(k\cU,k\cU)$ associated with a unipotent group scheme is a simple Lie algebra if and only if $\cU$ is elementary abelian.
This can be thought of as a generalization of \cite[Proposition 2.5]{LR1}.
Section 3 is concerned with the restricted Lie algebra structure of $\HH^1(\BB_0(\cG),\BB_0(\cG))$ for the infinitesimal group scheme of finite representation type.
Finally we will apply our results to the blocks of Frobenius kernels of smooth groups.

\noindent
\emph{Throughout this paper, all vector spaces are assumed to be finite-dimensional over a fixed algebraically closed field $k$ of characteristic $\Char(k):=p\geq 3$}.

\section{Preliminaries}
\subsection{Finite group schemes}\label{section finite group schemes}
Throughout this paper, $\cG$ is assumed to be a finite group scheme,
defined over $k$.
We denote by $k[\cG]$ and $k\cG:=k[\cG]^{*}$, the coordinate ring and the
group algebra (the algebra of measures) of $\cG$, respectively.
$\cG$ is called \textit{infinitesimal} if $k[\cG]$ is local.
A finite group scheme $\cU$ is said to be \textit{unipotent},
provided $k\cU$ is a local algebra.
The reader is referred to \cite{Ja06} and \cite{Wa79} for basic facts concerning algebraic group schemes.

Given a finite-dimensional restricted Lie algebra $(\fg,[p])$,
we let $U_0(\fg)$ denote the \textit{restricted enveloping algebra} of $\fg$,
i.e., the factor algebra of the universal enveloping algebra $U(\fg)$ by the ideal generated by $\{x^p-x^{[p]};~x\in\fg\}$.
Since the dual algebra $U_0(\fg)^{*}$ is local,
$\cG_\fg:=\Spec(U_0(\fg)^{*})$ is an infinitesimal group scheme of height $1$ such that $k\cG_\fg=U_0(\fg)$.
Let $(\fg,[p])$ be a restricted Lie algebra.
Recall that
$$V(\fg):=\{x\in\fg;~x^{[p]}=0\}$$
is the \textit{restricted (nullcone)} of $\fg$.

Following \cite{Fa17}, we refer to an abelian finite group scheme $\cE$ as \textit{elementary abelian},
provided there exist subgroup schemes $\cE_1, \ldots, \cE_n \subseteq \cE$ such that
\begin{enumerate}
\item[(a)] $\cE = \cE_1\cdots \cE_n$, and
\item[(b)] $\cE_i \cong \GG_{a(r_i)}, \ZZ/(p)$.
\end{enumerate}
Here $\GG_{a(r)}$ denotes the $r$-th Frobenius kernel of the additive group $\GG_a = \Spec_k(k[T])$,
while $\ZZ/(p)$ refers to the reduced group scheme,
whose group of $k$-rational points is the cyclic group $\ZZ/(p)$.
In particular,
a restricted Lie algebra $(\fg,[p])$ is \textit{elementary abelian} if $\fg$ is abelian and $\fg=V(\fg)$,
and $(\fg,[p])$ is $p$-\textit{nilpotent} if every $x\in\fg$ is annihilated by some iterate of the $p$-map.

\subsection{The first Hochschild cohomology $\HH^1(A,A)$}
Let $A$ be a finite-dimensional $k$-algebra.
A \textit{derivation} on A is a $k$-linear map $f: A\rightarrow A$ satisfying $f(ab)=f(a)b+af(b)$
for all $a, b\in A$.
For any $a\in A,~n\in\NN$ we have $f(a^n)=\sum_{i=1}^na^{i-1}f(a)a^{n-i}$.
If $f, g$ are derivations on $A$,
then the bracket $[f,g]:=f\circ g-g\circ f$ is also a derivation,
and the space $\Der(A)$
of derivations on $A$ becomes a Lie algebra.
If $x\in A$, then the map $\ad x$ defined by $\ad x(a):=xa-ax$ is a derivation,
any derivation of this form is called an \textit{inner derivation}.
The space $\IDer(A)$ of inner derivations is a Lie ideal in $\Der(A)$,
we have a canonical identification $\HH^1(A,A)\cong\Der(A)/\IDer(A)$,
where $\HH^1(A,A)$ is the first degree Hochschild cohomology of $A$.
The reader is referred to \cite[Chapter 9]{Wei} for basic
facts concerning Hochschild cohomology.
Note that the Hochschild cohomology
is Morita invariant as a Gerstenhaber algebra (see \cite{GS}) and
$\HH^1(A,A)$ is a restricted Lie algebra, and this is a derived invariant \cite{Zimm}.

\subsection{Representation type}
According to a fundamental theorem by Drozd \cite{Drozd},
the category of finite-dimensional algebras over an algebraically closed field may be subdivided into the disjoint classes of representation-finite,
tame and wild algebras.
Recall that a finite-dimensional associative algebra $A$ has \textit {finite representation type}
if the set of isoclasses of finite-dimensional indecomposable $A$-modules is finite.
We say that $A$ is \textit{tame} if for each natural number $d$ the set of isoclasses of $d$-dimensional
indecomposable $A$-modules is the union of finitely many one-parameter families and a finite set.
All other algebras are of \textit{wild representation type}.
We will employ \cite{AUS} and \cite{Erd} as general references on these matters.

\section{Unipotent group scheme}
Given a finite-dimensional algebra $A$, we always denote by $J(A)$ the Jacobson radical of $A$.
Let $\cU$ be a unipotent group scheme.
By definition, the group algebra $k\cU$ is local.
\begin{Lemma}\label{kU is local symmetric}
Let $\cU$ be a unipotent group scheme.
Then the group algebra $k\cU$ is a local symmetric $k$-algebra.
In particular, $[k\cU,k\cU]\subseteq J(k\cU)^2$.
\end{Lemma}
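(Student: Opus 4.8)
The plan is to view $k\cU$ as a finite-dimensional cocommutative Hopf algebra and to reduce the assertion that it is symmetric to two standard Hopf-theoretic facts. Since $k[\cU]$ is commutative, its dual $k\cU=k[\cU]^{*}$ is a cocommutative Hopf algebra, with counit $\epsilon$ and antipode $S$; by the Larson--Sweedler theorem it is a Frobenius algebra, and its locality is precisely the hypothesis that $\cU$ is unipotent. I would then invoke the classical criterion of Oberst and Schneider: a finite-dimensional Hopf algebra $H$ is symmetric if and only if $H$ is unimodular and $S^2$ is inner. Thus it suffices to check these two conditions for $H=k\cU$.

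Cocommutativity of $k\cU$ forces its antipode to be an involution, so $S^2=\id$ and in particular $S^2$ is inner. For unimodularity, recall that the failure of unimodularity is measured by the modular function $\alpha\in (k\cU)^{*}$, an algebra homomorphism $k\cU\to k$ (equivalently a character, i.e. a grouplike element of the dual), with $k\cU$ unimodular precisely when $\alpha=\epsilon$. Now $k\cU$ is local, so its augmentation ideal $\ker\epsilon$ is the unique maximal ideal $J(k\cU)$ and $k\cU/J(k\cU)\cong k$; hence every character of $k\cU$ factors through $k\cU/J(k\cU)$ and therefore equals $\epsilon$. In particular $\alpha=\epsilon$, so $k\cU$ is unimodular. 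By the criterion, $k\cU$ is symmetric. (Alternatively, one can construct the symmetrizing form directly from a two-sided integral, which exists exactly because $k\cU$ is unimodular; the nondegenerate form $(a,b)\mapsto\lambda(ab)$ attached to a suitable cointegral $\lambda$ is symmetric thanks to $S^2=\id$.)

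For the concluding containment I would argue directly from locality. As $\epsilon$ induces $k\cU/J(k\cU)\cong k$, the quotient $k\cU/J(k\cU)^2$ is of the form $k\cdot 1\oplus J(k\cU)/J(k\cU)^2$ with the radical part squaring to zero, and any such algebra is commutative. Hence each commutator $[a,b]=ab-ba$ maps to $0$ in $k\cU/J(k\cU)^2$, which is exactly the assertion $[k\cU,k\cU]\subseteq J(k\cU)^2$.

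The only real obstacle lies in the first assertion, and it is a matter of importing the right black box: namely the Oberst--Schneider symmetry criterion, or equivalently the explicit description of the Nakayama automorphism of $k\cU$ as the composite of $S^2$ with the winding automorphism attached to $\alpha$, which collapses to the identity once $S^2=\id$ and $\alpha=\epsilon$. Some care is needed with the left/right conventions for integrals and for the modular function, but the two verifications themselves are immediate, and the final containment is elementary and uses only that $k\cU$ is local.
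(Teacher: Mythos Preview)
Your proof is correct and follows essentially the same route as the paper: both arguments show that locality forces the modular function to equal the counit, whence the Nakayama automorphism is the identity and $k\cU$ is symmetric; the paper quotes the explicit formula $\nu=\id_{k\cU}\ast\zeta$ from \cite{FMS} rather than the Oberst--Schneider criterion, but your alternative paragraph is exactly their argument. The containment $[k\cU,k\cU]\subseteq J(k\cU)^2$ is handled identically in both, by observing that $k\cU/J(k\cU)^2$ is commutative.
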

\begin{proof}
It is well known (cf. \cite[(1.5)]{FMS}) that
the Nakayama automorphism of $k\cU$ is given by the convolution ${\rm id}_{k\cU}\ast\zeta$,
where $\zeta$ is the modular function.
Since $k\cU$ is local, $\zeta$ is the co-unit,
so that the Nakayama automorphism is the identity.
Hence $k\cU$ is isomorphic to its dual $A^{*}$ as an $A-A$-bimodule,
so that $k\cU$ is symmetric.
On the other hand, every element in $k\cU$ is of the form $\lambda+a$ for some $\lambda\in k$ and $a\in J(k\cU)$,
this yields $[k\cU,k\cU]\subseteq J(k\cU)^2$.
\end{proof}

Given a restricted Lie algebra $(\fg,[p])$.
The maximum dimension of all tori of $\fg$ will be denoted by $\mu(\fg)$.

Suppose that $\cE$ is an elementary abelian group scheme.
In view of \cite[(6.2.1),(6.2.2)]{Fa17}, there is an isomorphism:
$$k\cE\cong k[T_1,\ldots,T_n]/(T_i^p;~1\leq i\leq n):=B_n.$$
It follows that $\HH^1(k\cU,k\cU)=\Der(B_n)=:W_n$ is a restricted simple Lie algebra
(called $n$-th Jaconsbon-Witt algebra, see \cite[(IV.4.2)]{SF} for details).
Note that $W_n$ admits a self-centralizing torus and  $\mu(W_n)=n$ (cf. \cite[(IV.2.5)]{SF}).

Given a $\cG$-module $M$, we denote by $\cx_{\cG}(M)$ the complexity of $M$.
\begin{Proposition}\label{abelian unipotent group}
Let $\cU$ be an abelian unipotent group and $\cL:=\HH^1(k\cU,k\cU)$.
Then exists a $p$-nilpotent ideal $\fn$ such that $\cL/\fn\cong W_n$.
In particular, $\mu(\cL)=n=\cx_\cU(k)$.
\end{Proposition}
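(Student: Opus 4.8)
The plan is to compute $\cL$ directly as a derivation algebra once the algebra structure of $k\cU$ is pinned down. Since $\cU$ is abelian, $k\cU$ is a commutative local Hopf algebra, so every inner derivation vanishes and $\cL=\HH^1(k\cU,k\cU)=\Der(k\cU)$. First I would record the algebra structure: the Cartier dual $\cU^\vee$ is a finite connected commutative group scheme with coordinate ring $k[\cU^\vee]=k\cU$, and the coordinate ring of a finite connected group scheme is, as a $k$-algebra, a truncated polynomial algebra (see, e.g., \cite{Wa79}). Thus, in the group-algebra analogue of the isomorphism $k\cE\cong B_n$, there is an isomorphism of $k$-algebras $k\cU\cong R:=k[T_1,\ldots,T_n]/(T_1^{p^{e_1}},\ldots,T_n^{p^{e_n}})$, where $n=\dim_k\fm/\fm^2$ is the embedding dimension of the maximal ideal $\fm$; the elementary abelian case is exactly $e_1=\cdots=e_n=1$.

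Next I would set up the quotient map onto $W_n$. A derivation of $R$ is determined by arbitrary values $D(T_i)\in R$, the defining relations being automatically preserved since $D(T_i^{p^{e_i}})=p^{e_i}T_i^{p^{e_i}-1}D(T_i)=0$, so $\Der(R)=\bigoplus_{i=1}^n R\,\partial_{T_i}$ is $R$-free of rank $n$. Put $\fa:=(T_1^p,\ldots,T_n^p)\subseteq R$, so that $R/\fa\cong B_n$. Every derivation preserves $\fa$ because $D(T_i^p)=pT_i^{p-1}D(T_i)=0$; hence each $D$ induces $\bar D\in\Der(R/\fa)=\Der(B_n)=W_n$, and $\rho\colon\cL\to W_n,\ D\mapsto\bar D$ is a homomorphism of restricted Lie algebras. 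It is surjective (lift the coefficients of a given derivation of $B_n$ along $R\tha B_n$), and $\fn:=\ker\rho=\bigoplus_{i=1}^n\fa\,\partial_{T_i}$.

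It then remains to see that $\fn$ is $p$-nilpotent. With respect to the $\fm$-adic filtration, any $D\in\fn$ has coefficients in $\fa\subseteq\fm^p$ and therefore raises $\fm$-adic order by at least $p-1\geq 2$; thus $D$ is a nilpotent operator on the finite-dimensional space $R$, and since the restricted $p$-map on $\Der(R)\subseteq\End_k(R)$ is the operator $p$-th power, $D^{[p]^m}=D^{p^m}=0$ once $p^m\geq\dim_k R$. Hence $\fn$ is a $p$-nilpotent ideal with $\cL/\fn\cong W_n$. For the toral rank, the commuting Euler derivations $T_i\partial_{T_i}$ satisfy $(T_i\partial_{T_i})^{[p]}=T_i\partial_{T_i}$ and span an $n$-dimensional torus, so $\mu(\cL)\geq n$; conversely any torus of $\cL$ meets the $p$-nilpotent ideal $\fn$ trivially and is therefore embedded by $\rho$ into $W_n$, giving $\mu(\cL)\leq\mu(W_n)=n$, whence $\mu(\cL)=n$.

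Finally I would identify $n=\cx_\cU(k)$: since $R$ is a complete intersection of codimension $n$, the cohomology $\HH^\bullet(\cU,k)=\Ext^\bullet_R(k,k)$ has Krull dimension $n$, so the trivial module has complexity equal to the number of generators $n$. I expect the \emph{main obstacle} to be this first circle of ideas — identifying $k\cU$ with a truncated polynomial algebra and, above all, matching its embedding dimension $n$ simultaneously with the toral rank $\mu(\cL)$ and with the representation-theoretic invariant $\cx_\cU(k)$. Once this single integer $n$ is pinned down, the Lie-theoretic content reduces to the simple algebra $W_n$ decorated by the $p$-nilpotent tail $\fn$.
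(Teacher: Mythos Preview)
Your proof is correct and follows essentially the same route as the paper: identify $k\cU$ with a truncated polynomial ring $k[T_1,\ldots,T_n]/(T_i^{p^{e_i}})$, observe $\cL=\Der(R)=\bigoplus R\,\partial_{T_i}$, and take $\fn$ to be the derivations with coefficients in $(T_1^p,\ldots,T_n^p)$, so that $\cL/\fn\cong W_n$. Your $\fm$-adic filtration argument for the $p$-nilpotency of $\fn$ and your two-sided bound for $\mu(\cL)$ (explicit Euler torus for $\geq n$, injection into $W_n$ for $\leq n$) are minor elaborations of what the paper sketches via the $\ZZ$-grading and the general fact $\mu(\cL)=\mu(\cL/\fn)$ for $p$-nilpotent $\fn$.
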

\begin{proof}
As $\cU$ is abelian unipotent, general theory (\cite[(14.4)]{Wa79}) provides an isomorphism
$$k\cU\cong k[X_1,\ldots,X_n]/(X_1^{p^{a_1}},\ldots,X_n^{p^{a_n}}),$$
where $a_i\in\NN$ and $\cx_\cU(k)=n$.
For $i\in\{1,\ldots,n\}$, let $x_i=X_i+(X_1^{p^{a_1}},\ldots,X_n^{p^{a_n}})\in k\cU$,
and denote by $\partial_i:=\partial/\partial x_i$ the partial derivative with respect to the variable $x_i$.
It is easy to verify that $\cL=\{\sum_{i=1}^n f_i\partial_i;~f_i\in k\cU\}$.
There is a natural restricted $\mathbb{Z}$-grading on $\cL$ (see for instance \cite[(III.2)]{SF}), i.e.,
$\cL=\sum_i\cL_{[i]}$, where $\cL_{[i]}=\sum_{i=1}^n\sum_{\alpha_1+\cdots+\alpha_n=i+1} kx_1^{\alpha_1}\cdots x_n^{\alpha_n}\partial_i$.
Now let $\fn$ be subspace generated by the following set
$$\{x_1^{\alpha_1}\cdots x_n^{\alpha_n}\partial_k;~1\leq k\leq n;~a_i\geq p~{\rm for~some}~i\}.$$
Direct computation shows that $\fn$ is an ideal of $\cL$.
The restricted $\mathbb{Z}$-grading on $\fn$ ensures that the ideal is $p$-nilpotent.
Hence, $\mu(\fn)=0$ (cf. \cite[(3.3)]{FV03}).
Moreover,
the factor algebra $\cL/\fn$ is obviously isomorphic to the $n$-th Jacobson-Witt algebra $W_n$.
Consequently, $\mu(\cL)=\mu(W_n)=n$.
\end{proof}

The following result can be thought of as a generalization of \cite[Proposition 2.5, Remark 2.6]{LR1},
and the proof is adapted from \cite[Section 4]{LR1}.
\begin{Proposition}\label{simple iff elementary abelian}
Let $\cU$ be a unipotent group scheme.
Then $\cL:=\HH^1(k\cU,k\cU)$ is a simple Lie algebra if and only if $\cU$ is elementary abelian.
\end{Proposition}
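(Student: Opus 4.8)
The forward implication is already recorded in the discussion preceding the statement: when $\cU$ is elementary abelian one has $k\cU\cong B_n$, so $k\cU$ is commutative, $\IDer(k\cU)=0$, and $\cL=\Der(B_n)=W_n$ is the $n$-th Jacobson--Witt algebra, which is restricted simple for $n\geq 1$. For the converse I would argue by contraposition. Put $A:=k\cU$; by Lemma~\ref{kU is local symmetric} it is a local symmetric algebra with $[A,A]\subseteq J(A)^2$, and $\cL=\Der(A)/\IDer(A)$. Assuming that $\cU$ is not elementary abelian, the plan is to exhibit a nonzero proper ideal of $\cL$, treating separately the cases where $A$ is or is not commutative.

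If $A$ is commutative but $\cU$ is not elementary abelian, then in the presentation $A\cong k[X_1,\ldots,X_n]/(X_1^{p^{a_1}},\ldots,X_n^{p^{a_n}})$ of Proposition~\ref{abelian unipotent group} some exponent satisfies $a_i\geq 2$ (otherwise every $X_j^p=0$ and $A\cong B_n$ would be elementary abelian). That proposition supplies a $p$-nilpotent ideal $\fn$ of $\cL$ with $\cL/\fn\cong W_n$, and $a_i\geq 2$ forces $\fn\neq 0$ since then $x_i^{p}\partial_k\in\fn$. As $W_n\neq 0$ for $n=\cx_\cU(k)\geq 1$, the ideal $\fn$ is proper and nonzero, so $\cL$ is not simple.

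The essential case is $A$ non-commutative, where $[A,A]\neq 0$, and here the plan is to build an ideal of $\cL$ out of the commutator structure. The identity $f([a,b])=[f(a),b]+[a,f(b)]$ shows that $[A,A]$ is stable under every derivation, the same computation shows $Z(A)$ is derivation-stable, and each inner derivation $\ad_x$ has image in $[A,A]$ and vanishes on $Z(A)$. Hence $\mathcal{K}:=\{f\in\Der(A);~f(A)\subseteq [A,A]\}$ is a Lie ideal of $\Der(A)$ containing $\IDer(A)$ (as is the kernel of the restriction map $\Der(A)\to\Der(Z(A))$), and so descends to an ideal of $\cL$. This ideal is automatically proper: since $[A,A]\subseteq J(A)^2$, every $f\in\mathcal{K}$ satisfies $f(J(A)^i)\subseteq J(A)^{i+1}$, so $\mathcal{K}=\Der(A)$ would make $\Der(A)$---and thus $\cL$---nilpotent, contradicting simplicity. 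The main obstacle is to show this ideal is also \emph{nonzero} in $\cL$, i.e. that non-commutativity is detected by an \emph{outer} derivation, so that $\mathcal{K}\supsetneq\IDer(A)$. This is where the symmetric structure of Lemma~\ref{kU is local symmetric} and a degree analysis over the radical grading $\gr A=\bigoplus_i J(A)^i/J(A)^{i+1}$ should enter: the socle $\Soc(A)=J(A)^s$ is one-dimensional and central, and comparing it with the graded components of $\Der(A)$ is expected to show that, unless $\gr A\cong B_n$, the low-degree derivations cannot all be inner. Carrying this out along the lines of \cite[Section~4]{LR1} would yield the required nonzero proper ideal and, together with the abelian case, complete the contrapositive---hence the stated equivalence.
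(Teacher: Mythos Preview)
Your treatment of the forward direction and of the case where $A=k\cU$ is commutative agrees with the paper (and with Proposition~\ref{abelian unipotent group}). The gap is exactly where you flag it: in the non-commutative case you construct the ideal $\mathcal{K}/\IDer(A)\subseteq\cL$ and show it is proper, but you do not show it is nonzero. Your suggested route---comparing $\Soc(A)$ with the graded pieces of $\Der(A)$ to force ``$\gr A\cong B_n$ or some low-degree derivation is outer''---does not obviously produce a non-inner derivation whose image lies in $[A,A]$; nothing in the radical grading prevents all such derivations from being inner. So as written the non-commutative branch is a plan, not a proof.

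The paper closes this case by a different (and shorter) mechanism, which you only mention parenthetically. Assuming $\cL$ simple and $Z(A)\neq A$, Nakayama gives $J(Z(A))A\neq J(A)$; then \cite[(3.1),(3.2)]{LR1} imply that the canonical restriction map $\cL\to\HH^1(Z(A),Z(A))$ has nonzero kernel. Simplicity forces the map to be zero, i.e., every outer derivation of $A$ restricts trivially to $Z(A)$. Since $A/J(A)^2$ is commutative (Lemma~\ref{kU is local symmetric}), the argument of \cite[p.~1127]{LR1} then yields a contradiction. In other words, the paper uses the kernel of $\cL\to\Der(Z(A))$ rather than your $\mathcal{K}$, and the crucial nonvanishing of that kernel is precisely what \cite[(3.1),(3.2)]{LR1} supply---this is the missing idea in your argument. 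Once $A$ is shown to be commutative, the paper invokes (the proof of) Proposition~\ref{abelian unipotent group}, just as you do.
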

\begin{proof}
According to Lemma \ref{kU is local symmetric}, $k\cU$ is a local symmetric algebra.
Suppose that $\cL$ is a simple Lie algebra.

We first show that $\cU$ is an abelian unipotent group.
Let $Z(k\cU)$ be the center of $k\cU$.
Suppose that $Z(k\cU)\neq k\cU$,
the Nakayama Lemma yields $J(Z(k\cU))k\cU\neq J(k\cU)$.
In view of \cite[(3.1), (3.2)]{LR1},
the canonical Lie algebra homomorphism $\cL\rightarrow\HH^1(Z(k\cU),Z(k\cU))$ is not injective.
By assumption, the homomorphism is zero.
Note that the algebra $k\cU/J(k\cU)^2$ is commutative (Lemma \ref{kU is local symmetric}).
We thus obtain a contraction by using the same argument in \cite[Page 1127]{LR1}.

As $\cU$ is abelian unipotent, the proof of Proposition \ref{abelian unipotent group}
implies that $\cU$ must be elementary abelian.
\end{proof}

Let $\cG$ be a finite group scheme,
we will denote the principal block of $k\cG$ by $\BB_0(\cG)$ and let $\cG_{\rm lr}$ be the largest linearly reductive normal subgroup of $\cG$,
see \cite[(I.2.37)]{Vo}. If $\cG$ is a reduced finite group scheme,
then $\cG_{\rm lr}$ corresponds to the largest normal subgroup $O_{p'}(\cG(k))$ of the finite group $\cG(k)$, whose order is prime to $p$.

\begin{Corollary}\label{nilpotent principle block}
If $\cN$ is a nilpotent group scheme,
then $\cL:=\HH^1(\BB_0(\cN),\BB_0(\cN))$ is simple if and only if $\cN/\cN_{\rm lr}$ is elementary abelian.
\end{Corollary}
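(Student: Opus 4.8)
The plan is to reduce the statement to the unipotent case already settled in Proposition \ref{simple iff elementary abelian}. The key claim is that, for a nilpotent group scheme, the principal block $\BB_0(\cN)$ is isomorphic as a $k$-algebra to the group algebra of the unipotent quotient $\cN/\cN_{\rm lr}$. Granting this, functoriality of Hochschild cohomology gives an isomorphism of restricted Lie algebras $\cL\cong\HH^1(k(\cN/\cN_{\rm lr}),k(\cN/\cN_{\rm lr}))$, and Proposition \ref{simple iff elementary abelian}, applied to the unipotent group scheme $\cN/\cN_{\rm lr}$, yields both implications simultaneously: $\cL$ is simple if and only if $\cN/\cN_{\rm lr}$ is elementary abelian.

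First I would record the structural decomposition of a nilpotent finite group scheme. A nilpotent $\cN$ decomposes as a direct product $\cN\cong\cU\times\cV$ of a unipotent group scheme $\cU$ and a linearly reductive group scheme $\cV$; this is the group-scheme analogue of writing a nilpotent finite group as the product of its Sylow $p$-subgroup and its Hall $p'$-subgroup, and I would extract it from \cite{Vo}. Since any linearly reductive subgroup scheme of a unipotent group scheme is trivial, the image in $\cU$ of any linearly reductive normal subgroup of $\cN$ is both linearly reductive and unipotent, hence trivial; so every such subgroup lies in $\cV$. Therefore $\cV=\cN_{\rm lr}$ and $\cU\cong\cN/\cN_{\rm lr}$ is unipotent, which is exactly the hypothesis needed to invoke Proposition \ref{simple iff elementary abelian}.

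Next I would identify the principal block. The product decomposition induces an isomorphism of Hopf algebras $k\cN\cong k\cU\otimes_k k\cN_{\rm lr}$, compatible with the augmentation maps. Here $k\cU$ is local by definition of unipotent, so its only central idempotents are $0$ and $1$, while $k\cN_{\rm lr}$ is semisimple because $\cN_{\rm lr}$ is linearly reductive. Hence every central primitive idempotent of $k\cN$ has the form $1\otimes e$ with $e$ a central primitive idempotent of $k\cN_{\rm lr}$, and the corresponding block is $k\cU\otimes B$ for the matrix factor $B=ek\cN_{\rm lr}$. The trivial $\cN$-module is the outer tensor product of the two trivial modules, so the principal block corresponds to the factor of $k\cN_{\rm lr}$ containing the trivial $\cN_{\rm lr}$-module; as that simple module is one-dimensional, the factor is $k$ itself, whence $\BB_0(\cN)\cong k\cU=k(\cN/\cN_{\rm lr})$ as $k$-algebras. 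Since $\HH^1$ together with its restricted Lie structure is preserved under algebra isomorphism (indeed it is even Morita invariant as a Gerstenhaber algebra, cf. \cite{GS}, \cite{Zimm}), we obtain $\cL\cong\HH^1(k\cU,k\cU)$ and conclude by Proposition \ref{simple iff elementary abelian}.

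The main obstacle is the structural input of the second paragraph: establishing that a nilpotent finite group scheme is an honest direct product $\cU\times\cN_{\rm lr}$, so that the unipotent quotient $\cN/\cN_{\rm lr}$ is a complemented direct factor rather than merely a subquotient, and verifying that the linearly reductive factor coincides with the largest linearly reductive normal subgroup $\cN_{\rm lr}$. Once this decomposition is in hand the remaining steps are formal: the block computation uses only locality of $k\cU$ and semisimplicity of $k\cN_{\rm lr}$, and the passage to $\HH^1$ is pure functoriality.
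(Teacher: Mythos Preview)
Your proposal is correct and follows the same overall strategy as the paper: identify $\cN/\cN_{\rm lr}$ as unipotent, establish $\BB_0(\cN)\cong k(\cN/\cN_{\rm lr})$, and invoke Proposition~\ref{simple iff elementary abelian}. The only difference is in the middle step: the paper simply cites \cite[(1.1)]{Fa06} for the isomorphism $\BB_0(\cN)\cong\BB_0(\cN/\cN_{\rm lr})$ (a general principal-block statement, valid without any splitting hypothesis), whereas you unpack it via the direct-product decomposition $\cN\cong\cU\times\cN_{\rm lr}$ and a hands-on block computation. Your route is more explicit but, as you yourself flag, rests on that structural splitting; the paper's citation sidesteps this entirely.
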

\begin{proof}
General theory tells us that $\cN_{\rm lr}$ is the unique largest
diagonalizable subgroup scheme of $\cN$ and $\cN/\cN_{\rm lr}$ is unipotent.
According to \cite[(1.1)]{Fa06},
there is an isomorphism $\BB_0(\cN)\cong\BB_0(\cN/\cN_{\rm lr})$ between the principal blocks
of $k\cN$ and $k(\cN/\cN_{\rm lr})$.
The assumption in conjunction with Proposition \ref{simple iff elementary abelian} yields the assertion.
\end{proof}

\section{Infinitesimal groups of finite representation type}
\subsection{$\BB_0(\cG)$ has finite representation type}
Given an infinitesimal group $\cG$,
we denote by $\cM(\cG)$ the unique largest diagonalizable (multiplicative) normal subgroup of $\cG$.
In view of \cite[(7.7),(9.5)]{Wa79},
the group scheme $\cM(\cG)$ coincides with the multiplicative constituent of the center $\cC(\cG)$ of $\cG$.

For an arbitrary finite group scheme $\cG=\Spec_k(k[\cG])$,
we let $\cG^{(1)} =\Spec_k(k[\cG]^{(1)})$ be the finite group scheme,
whose coordinate ring differs from $k[\cG]$ only by its structure as a $k$-space: On $k[\cG]^{(1)}$ an element $\alpha \in k$ acts via $\alpha^{p^{-1}}$, cf.\ \cite[(I.9.2)]{Ja06}.
In \cite{DG70}, this group scheme is denoted $\cG^{(p)}$.

For a commutative unipotent infinitesimal group scheme $\cU$,
we let
\[ V_{\cU} : \cU^{(p)} \lra \cU\]
be the Verschiebung, cf.\ \cite[(IV,\S3,${\rm n}^{\rm o}$4),(II,\S7,${\rm n}^{\rm o}$1)]{DG70}. Following \cite{FV00}, we refer to
$\cU$ as being \textit{V-uniserial}, provided there is an exact sequence
\[\cU^{(p)} \stackrel{V_{\cU}}{\lra} \cU \lra \GG_{a(1)} \lra e_k.\]

It is well known (cf. \cite[(3.1)]{FV00}) the group algebra $k\cG$
has finite representation type if and only if its principal block $\BB_0(\cG)$ enjoys this property.
The $r$-th Frobenius kernel of the multiplicative group $\GG_m := \Spec_k(k[X,X^{-1}])$ will be denoted $\GG_{m(r)}$.
Given an infinitesimal group $\cG$ of finite representation type,
it was shown in \cite[(2.7)]{FV00} that
$$\cG':=\cG/\cM(\cG)\cong\cU\rtimes\GG_{m(r)}$$
is a semidirect product with a V-uniserial normal subgroup $\cU$.

In the sequel, we will employ the \textit{adjoint representation} of $k\GG_{m(r)}$ on $k\cU$.
Letting $\eta$ denote the antipode of the Hopf algebra $k\GG_{m(r)}$, this action is given by
\begin{align}\label{module action}
m\cdot u:=\sum\limits_{(m)}m_{(1)}u\eta(m_{(2)}),~\forall~m\in k\GG_{m(r)},~u\in k\cU.
\end{align}
The adjoint representation endows $k\cU$ with the structure of an $k\GG_{m(r)}$-module algebra,
and the multiplication $k\cU\otimes_k k\GG_{m(r)}\rightarrow k\cG'$ induces
an isomorphism between the smash product $k\cU\#k\GG_{m(r)}$ and $k\cG'$ (cf. \cite[p.40f]{Mo}).
The character group of $k\GG_{m(r)}$ will be denoted $\cX:=\cX(\GG_{m(r)})$.
We will write the convolution product on $\cX(\GG_{m(r)})$ additively, i.e.,
\begin{align}\label{additive structure of character groups}
(\lambda+\mu)(m)=\sum\limits_{(m)}\lambda(m_{(1)})\mu(m_{(2)}),~\forall~m\in k\GG_{m(r)},~\lambda, \mu\in\cX(\GG_{m(r)}).
\end{align}
Observe that the augmentation is the zero element and $\cX\cong\ZZ/(p^r)$.

\subsection{Restricted Lie algebra structure}
In the sequel, we will make a detailed examination of the restricted Lie algebra structure $\HH^1(A,A)$.
We will always assume that $\cG$ is an infinitesimal group of finite representation type,
so that $\cG'=\cG/\cM(\cG)\cong\cU\rtimes\GG_{m(r)}$ is a semidirect product with a $V$-uniserial normal subgroup $\cU$.

Since $k\GG_{m(r)}$ is semisimple,
there results a weight space decomposition
$$k\cU=\bigoplus\limits_{\alpha\in R}(k\cU)_\alpha,$$
where $R$ is a finite subset of $\cX$.
There exists a root vector $x\in (k\cU)_\alpha$ such that $\{1, x, x^2,\ldots, x^{p^n-1}\}$
is a basis of $k\cU$ (cf. \cite[p. 13]{FV00}).
Thus,
$$R=\{i\alpha;~0\leq i\leq p^n-1\}.$$
Moreover, the commutative semisimple algebra $k\GG_{m(r)}$ decomposes into
a direct sum $$k\GG_{m(r)}=\bigoplus\limits_{\lambda\in\cX(\GG_{m(r)})}ku_\lambda,$$
with primitive orthogonal idempotents $u_\lambda$.
Accordingly, we have $mu_\lambda=\lambda(m)u_\lambda$
for every element $m\in k\GG_{m(r)}$,
so that $\lambda(u_\mu)=\delta_{\mu,\lambda},~\forall~\lambda, \mu\in\cX(\GG_{m(r)})$.

Now, we let $A:=k\cG'=k\cU\#k\GG_{m(r)}$. Then $A$ has the set of monomials
\begin{align}\label{basis of A}
\mathfrak{B}:=\{u_\lambda x^j;~\lambda\in\cX,~0\leq j\leq p^n-1\}
\end{align}
as a $k$-basis.
Let $f\in\Der(A)$,
then $f$ is uniquely determined by its values at $u_\lambda$ ($\lambda\in\cX$) and $x$.
\begin{Lemma}\label{multiplication rules in kG'}
With the above notations.
Then we have $u_\lambda x u_\mu=\delta_{\lambda,\mu+\alpha}xu_\mu$.
In particular, $u_\lambda x=xu_{\lambda-\alpha}$.
\end{Lemma}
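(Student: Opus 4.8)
The plan is to compute the products $u_\lambda x u_\mu$ directly from the structure of the smash product $A = k\cU \# k\GG_{m(r)}$, using the module action \eqref{module action} and the idempotent relations established just before the statement. The key facts I would invoke are: (i) the $u_\lambda$ are primitive orthogonal idempotents with $m u_\lambda = \lambda(m) u_\lambda$ for all $m \in k\GG_{m(r)}$; (ii) $x \in (k\cU)_\alpha$ is a weight vector of weight $\alpha$ for the adjoint action, meaning $m \cdot x = \alpha(m) x$ for $m \in k\GG_{m(r)}$; and (iii) the additive convolution structure on the character group recorded in \eqref{additive structure of character groups}.

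First I would unwind the product $x u_\mu$ inside the smash product. In a smash product $k\cU \# k\GG_{m(r)}$, the commutation rule expresses $m \cdot u = \sum_{(m)} m_{(1)} u \eta(m_{(2)})$, equivalently $u m = \sum_{(m)} (\eta^{-1}(m_{(2)}) \dact u)\, m_{(1)}$ after rearranging; the cleaner route is to use the defining relation $m u = \sum_{(m)} (m_{(1)} \dact u)\, m_{(2)}$, so that $x u_\mu$ and $u_\mu x$ can be compared by moving the idempotent past the weight vector $x$. Applying this with the weight-vector property $m_{(1)} \dact x = \alpha(m_{(1)}) x$, the sum collapses: $u_\mu x = \sum_{(u_\mu)} (u_{\mu,(1)} \dact x)\, u_{\mu,(2)}$, and evaluating $\alpha$ against the comultiplication of $u_\mu$ together with \eqref{additive structure of character groups} should produce $u_\mu x = x u_{\mu - \alpha}$ (equivalently the shifted form $u_{\lambda} x = x u_{\lambda - \alpha}$). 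I would verify this by testing against the dual pairing: since $\lambda(u_\nu) = \delta_{\nu,\lambda}$, the claim $u_\mu x = x u_{\mu-\alpha}$ is checked by confirming both sides transform the same way under left multiplication by an arbitrary $m \in k\GG_{m(r)}$, using that the weight of $x u_{\mu-\alpha}$ adds $\alpha$ to $\mu - \alpha$ to recover $\mu$.

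Once the relation $u_\lambda x = x u_{\lambda - \alpha}$ is in hand, the first identity follows immediately: $u_\lambda x u_\mu = x u_{\lambda - \alpha} u_\mu = \delta_{\lambda - \alpha,\mu}\, x u_\mu = \delta_{\lambda,\mu+\alpha}\, x u_\mu$, where the middle step uses the orthogonality $u_\nu u_\mu = \delta_{\nu,\mu} u_\mu$ of the primitive idempotents. So in fact the ``In particular'' clause is the substantive computation and the displayed identity is a one-line consequence of idempotent orthogonality; I would likely organize the proof in that order, first establishing the commutation $u_\lambda x = x u_{\lambda-\alpha}$ and then deducing the product formula.

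The main obstacle I anticipate is bookkeeping the comultiplication of $u_\lambda$ correctly and pinning down the exact sign/direction convention for the weight shift, i.e.\ confirming it is $\lambda - \alpha$ rather than $\lambda + \alpha$. This hinges on whether the adjoint action \eqref{module action} is being used to push $x$ left-to-right or right-to-left, and on the precise interplay between the antipode $\eta$ and the additive character convention in \eqref{additive structure of character groups}. I would resolve this by a careful evaluation on the dual basis: apply a character $\nu$ to both sides after multiplying by $u_\lambda$ on the left, forcing the weight of $x$ to add exactly $\alpha$, which fixes the shift to be $-\alpha$ consistently. The remainder is routine linear algebra over the basis $\mathfrak{B}$ of \eqref{basis of A}.
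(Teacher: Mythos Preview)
Your approach is correct and uses the same ingredients as the paper: the smash-product commutation rule, the weight-vector property $m\cdot x=\alpha(m)x$, the convolution formula \eqref{additive structure of character groups}, and the orthogonality of the $u_\lambda$. The only difference is the order of exposition: the paper computes $u_\lambda x u_\mu$ directly in one line (using $u_{\lambda,(2)}u_\mu=\mu(u_{\lambda,(2)})u_\mu$ to collapse the Sweedler sum to $(\mu+\alpha)(u_\lambda)xu_\mu=\delta_{\lambda,\mu+\alpha}xu_\mu$), and then deduces $u_\lambda x=xu_{\lambda-\alpha}$ by summing over $\mu$, whereas you propose to establish the commutation $u_\lambda x=xu_{\lambda-\alpha}$ first and then multiply by $u_\mu$. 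Either order works; the paper's ordering is slightly more economical since it avoids the separate identification of $\sum_{(u_\lambda)}\alpha(u_{\lambda,(1)})u_{\lambda,(2)}$ with $u_{\lambda-\alpha}$.
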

\begin{proof}
By definitions ((\ref{module action}) and (\ref{additive structure of character groups})),
we obtain
\begin{align*}
u_\lambda xu_\mu &=\sum\limits_{(u_\lambda)}(u_{\lambda,(1)}\cdot x)u_{\lambda,(2)}u_\mu=\sum\limits_{(u_\lambda)}\alpha(u_{\lambda,(1)})x\mu(u_{\lambda,(2)})u_\mu\\
                 &=(\mu+\alpha)(u_\lambda)xu_\mu=\delta_{\lambda,\mu+\alpha}xu_\mu.
\end{align*}
Moreover,
$u_\lambda x=u_\lambda x\sum_{\mu\in\cX}u_\mu=\sum_{\mu\in\cX}u_\lambda xu_\mu$,
our assertion follows.
\end{proof}

\begin{Lemma}\label{innder derivation of kG'}
For $\lambda, j$ such that $\lambda\in\cX(\GG_{m(r)}),~0\leq j\leq p^n-1$,
consider the inner derivation $d_{\lambda,j}=\ad u_\lambda x^j$ on $A$.
\begin{enumerate}
\item $d_{\lambda,j}(u_\mu)=(\delta_{\mu+j\alpha,\lambda}-\delta_{\mu,\lambda})u_\lambda x^j$. In particular, $d_{\lambda,j}(u_\mu)=0$ if $p^r\mid j$.
\item $d_{\lambda,j}(x)=(u_\lambda-u_{\lambda+\alpha})x^{j+1}$. In particular, $d_{\lambda,j}(x)=0$ if and only if $j=p^n-1$.
\item Let $d$ be an inner derivation.
Then $d(u_\mu)$ is a linear combination of monomials $u_\mu x^j$ and $u_{\mu+j\alpha}x^j$ with $0\leq j\leq p^n-1$ such that $p^r\nmid j$.
Similarly, $d(x)$ is a linear combination of $\sum_{i=0}^{p^r-1}a_iu_{i\alpha}x^j$ with $\sum_{i=0}^{p^r-1}a_i=0,~a_i\in k;~1\leq j\leq p^n-1$.
\end{enumerate}
\end{Lemma}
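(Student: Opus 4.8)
The plan is to compute the effect of an inner derivation $\ad u_\lambda x^j$ directly on the basis elements $u_\mu$ and $x$ using the multiplication rules from Lemma \ref{multiplication rules in kG'}, and then to assemble the general statement in part (3) by taking linear combinations. The key input is the commutation relation $u_\lambda x = x u_{\lambda - \alpha}$, which iterates to $u_\lambda x^j = x^j u_{\lambda - j\alpha}$, so that more generally $x^j u_\mu = u_{\mu + j\alpha} x^j$. First I would establish this iterated rule, since every subsequent computation reduces to moving idempotents past powers of $x$.

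For part (1), I would compute $d_{\lambda,j}(u_\mu) = u_\lambda x^j u_\mu - u_\mu u_\lambda x^j$. The second term vanishes unless $\mu = \lambda$ (orthogonality of the idempotents), giving $-\delta_{\mu,\lambda} u_\lambda x^j$; for the first term, pushing $x^j$ to the left via $u_\lambda x^j u_\mu = u_\lambda u_{\mu + j\alpha} x^j = \delta_{\lambda, \mu + j\alpha} u_\lambda x^j$, which is the $\delta_{\mu + j\alpha, \lambda}$ term. The special case $p^r \mid j$ follows because then $j\alpha = 0$ in $\cX \cong \ZZ/(p^r)$, so the two Kronecker deltas coincide and cancel. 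For part (2), I would compute $d_{\lambda,j}(x) = u_\lambda x^j \cdot x - x \cdot u_\lambda x^j = u_\lambda x^{j+1} - x u_\lambda x^j$; rewriting $x u_\lambda = u_{\lambda + \alpha} x$ turns the second term into $u_{\lambda + \alpha} x^{j+1}$, yielding $(u_\lambda - u_{\lambda + \alpha})x^{j+1}$. This vanishes iff $x^{j+1} = 0$, i.e.\ $j + 1 = p^n$, since $\{1, x, \ldots, x^{p^n - 1}\}$ is a basis and $x^{p^n} = 0$.

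For part (3), an arbitrary inner derivation is $d = \ad\bigl(\sum_{\lambda, j} c_{\lambda, j}\, u_\lambda x^j\bigr)$ for scalars $c_{\lambda, j}$, so $d$ is the corresponding linear combination of the $d_{\lambda, j}$. Applying part (1), $d(u_\mu) = \sum_{\lambda, j} c_{\lambda, j}(\delta_{\mu + j\alpha, \lambda} - \delta_{\mu, \lambda}) u_\lambda x^j$; the first Kronecker delta forces $\lambda = \mu + j\alpha$, contributing monomials $u_{\mu + j\alpha} x^j$, while the second forces $\lambda = \mu$, contributing $u_\mu x^j$, and by the special case in (1) only indices with $p^r \nmid j$ survive. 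Applying part (2), $d(x) = \sum_{\lambda, j} c_{\lambda, j}(u_\lambda - u_{\lambda + \alpha}) x^{j+1}$; after reindexing, the coefficient of $x^{j}$ (for $1 \le j \le p^n - 1$) is a sum $\sum_{i=0}^{p^r - 1} a_i u_{i\alpha}$ where the telescoping structure $u_\lambda - u_{\lambda + \alpha}$ guarantees $\sum_i a_i = 0$. I expect the main obstacle to be bookkeeping rather than conceptual difficulty: one must track the index shifts carefully when reindexing $j \mapsto j+1$ and when summing over the cyclic group $\cX \cong \ZZ/(p^r)$, and verify that the vanishing of total coefficients in $d(x)$ reflects exactly the fact that each generator $u_\lambda - u_{\lambda + \alpha}$ lies in the augmentation ideal of $k\GG_{m(r)}$.
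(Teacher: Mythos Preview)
Your proposal is correct and follows essentially the same route as the paper: both use the commutation rule $u_{\mu+\alpha}x=xu_\mu$ from Lemma~\ref{multiplication rules in kG'} (iterated to $x^j u_\mu = u_{\mu+j\alpha}x^j$) to compute parts (1) and (2) directly, and then deduce (3) by linearity. Your treatment of (3) is in fact more explicit than the paper's, which simply asserts that (3) follows from (1) and (2); your telescoping/augmentation-ideal observation for the vanishing of $\sum_i a_i$ is exactly the content the paper leaves implicit.
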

\begin{proof}
Note that $\alpha$ has order $p^r$ (\cite[(2.1)]{FV00}),
in view of Lemma \ref{multiplication rules in kG'},
we have $u_{\mu+\alpha}x=xu_\mu$.
It follows that
$$d_{\lambda,j}(u_\mu)=[u_\lambda x^j,u_\mu]=u_\lambda u_{\mu+j\alpha}x^j-u_\mu u_\lambda x^j=(\delta_{\mu+j\alpha,\lambda}-\delta_{\mu,\lambda})u_\lambda x^j.$$
Similarly,
$$d_{\lambda,j}(x)=[u_\lambda x^j,x]=u_\lambda x^{j+1}-xu_\lambda x^j=u_\lambda x^{j+1}-u_{\lambda+\alpha}x^{j+1},$$
this proves (2).
An inner derivation on $A$ is a linear combination of the inner derivations $d_{\lambda,j}$,
part (3) now follows from (1) and (2).
\end{proof}

The adjoint representation of $k\GG_{m(r)}$ in (\ref{module action}) can be extended naturally to the whole algebra $A$.
As $k\GG_{m(r)}$ is commutative,
the module $k\GG_{m(r)}$ belongs to the zero weight space.
\begin{Lemma}\label{derivation kG'}
Let $f\in\Der(A)$.
Then there is a derivation $g\in\Der(A)$ with the following properties:
\begin{enumerate}
\item[(a)] $g(k\GG_{m(r)})=0,~m\cdot g(x)=\alpha(m)g(x),~\forall~m\in k\GG_{m(r)}$, and
\item[(b)] $f\equiv g~\pmod{\IDer(A)}$.
\end{enumerate}
\end{Lemma}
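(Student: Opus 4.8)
The plan is to produce $g$ in two moves: first subtract an explicit inner derivation so that the result vanishes on the semisimple subalgebra $k\GG_{m(r)}$, and then observe that a single commutation relation automatically forces the modified derivation to carry $x$ into the weight-$\alpha$ space. First I would exploit that $k\GG_{m(r)}=\bigoplus_{\lambda\in\cX}ku_\lambda$, where $\{u_\lambda\}_{\lambda\in\cX}$ is a complete set of orthogonal idempotents with $\sum_\lambda u_\lambda=1$. Since this subalgebra is commutative semisimple over the algebraically closed field $k$, hence separable, the restriction $f|_{k\GG_{m(r)}}$ is an inner derivation into $A$; concretely I would set $z:=\sum_{\lambda\in\cX}f(u_\lambda)u_\lambda\in A$ and put $g:=f-\ad z$. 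Using only the idempotent relations $u_\lambda u_\mu=\delta_{\lambda\mu}u_\lambda$, $\sum_\lambda u_\lambda=1$ together with $f(u_\lambda)=f(u_\lambda)u_\lambda+u_\lambda f(u_\lambda)$ and $f(u_\lambda)u_\mu+u_\lambda f(u_\mu)=0$ for $\lambda\neq\mu$, a direct manipulation yields $(\ad z)(u_\mu)=f(u_\mu)$ for every $\mu$. Thus $g(u_\mu)=0$ for all $\mu$, i.e. $g(k\GG_{m(r)})=0$, and $g\equiv f\pmod{\IDer(A)}$ by construction, so (b) holds.

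For the weight condition I would invoke Lemma~\ref{multiplication rules in kG'}, which gives $xu_\mu=u_{\mu+\alpha}x$ and hence $x^ju_\mu=u_{\mu+j\alpha}x^j$. Applying the derivation $g$ to $xu_\mu=u_{\mu+\alpha}x$ and using $g(u_\mu)=g(u_{\mu+\alpha})=0$ produces the identity $g(x)u_\mu=u_{\mu+\alpha}g(x)$, valid for every $\mu\in\cX$. Expanding $g(x)=\sum_{\nu,j}c_{\nu,j}u_\nu x^j$ in the basis $\mathfrak{B}$ of~(\ref{basis of A}) and comparing coefficients in this identity will force $c_{\nu,j}=0$ whenever $j\not\equiv 1\pmod{p^r}$. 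Since $k\GG_{m(r)}$ sits in the zero weight space for the adjoint action and $x\in(k\cU)_\alpha$, the module-algebra structure gives $m\cdot(u_\nu x^j)=(j\alpha)(m)\,u_\nu x^j$; as $\alpha$ has order $p^r$, every surviving monomial has $j\alpha=\alpha$, so $g(x)$ lies in the weight-$\alpha$ space and $m\cdot g(x)=\alpha(m)g(x)$ for all $m\in k\GG_{m(r)}$, which is exactly (a).

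The only genuinely computational point, and the place I expect to need care, is the idempotent identity $(\ad z)(u_\mu)=f(u_\mu)$ of the first step, which is the explicit incarnation of the vanishing $\HH^1(k\GG_{m(r)},A)=0$ guaranteed by separability. The \emph{main obstacle} is less a conceptual difficulty than the bookkeeping in the coefficient comparison of the second step, where one must track the idempotent index $\nu$ and the $x$-degree $j$ simultaneously. What makes the argument clean is that once $g$ kills the idempotents it is determined by $g(x)$, and the single relation $xu_\mu=u_{\mu+\alpha}x$ already pins down the weight of $g(x)$, so no separate analysis of the relation $x^{p^n}=0$ or of higher monomials is required. Conceptually the same conclusion follows from the linear reductivity of $\GG_{m(r)}$, which splits $\Der(A)/\IDer(A)$ into weight spaces relative to $k\GG_{m(r)}$ and lets one extract the weight-$0$ representative; but the explicit route above is self-contained and avoids invoking that machinery.
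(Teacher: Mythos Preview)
Your proof is correct. The first step---subtracting $\ad z$ with $z=\sum_\lambda f(u_\lambda)u_\lambda$---is a valid explicit realization of the element $a$ the paper obtains abstractly by invoking separability of $k\GG_{m(r)}$ and the vanishing $\HH^1(k\GG_{m(r)},A)=0$ (Hochschild). So on that point you and the paper agree, with your version being more hands-on.

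The second step is where the routes diverge. You derive $g(x)u_\mu=u_{\mu+\alpha}g(x)$ from the commutation relation, then expand $g(x)$ in the basis $\mathfrak{B}$ and eliminate coefficients with $j\not\equiv 1\pmod{p^r}$, finally translating the surviving monomials into a statement about the weight. This works, but the paper bypasses the basis entirely: once $g$ annihilates $k\GG_{m(r)}$, applying $g$ directly to the Sweedler expression $m\cdot x=\sum_{(m)}m_{(1)}x\,\eta(m_{(2)})$ gives in one line
\[
\alpha(m)\,g(x)=g(m\cdot x)=\sum_{(m)}m_{(1)}g(x)\,\eta(m_{(2)})=m\cdot g(x),
\]
with no need to look at individual idempotents, powers of $x$, or the order of $\alpha$. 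Your argument proves the same thing but is doing more work than necessary; the ``bookkeeping'' obstacle you anticipated is an artifact of the basis approach and disappears if you stay at the Hopf-algebra level.
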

\begin{proof}
Since $k\GG_{m(r)}$ is separable,
it follows that $\HH^1(k\GG_{m(r)}, A)=(0)$ (\cite[Theorem 4.1]{Ho}).
Thus, if $f\in\Der(A)$,
then the restriction $f|_{k\GG_{m(r)}}$ is an inner derivation.
Hence there is an element $a\in A$ such that $(f-\ad a)|_{k\GG_{m(r)}}=0$.
Set $g:=f-\ad a$, we apply (\ref{module action}) to see that
$$g(m\cdot x)=g(\sum\limits_{(m)}m_{(1)}x\eta(m_{(2)}))=\sum\limits_{(m)}m_{(1)}g(x)\eta(m_{(2)})=m\cdot g(x)=\alpha(m)g(x).$$
\end{proof}
As $A$ is a completely reducible $k\GG_{m(r)}$-module,
it follows that the set
\begin{align}\label{basis of weight alpha}
\mathfrak{J}:=\{u_\lambda x^{jp^r+1};~\lambda\in\cX,~0\leq jp^r+1\leq p^n-1\}
\end{align}
is a $k$-basis of the weight space of $A$ with weight $\alpha\in\cX$,
where $~0\leq j\leq p^{n-r}-1$ if $n\geq r$ and $j=0$ if $n<r$.

\begin{Lemma}\label{monomial derivation g_lambda j}
Let $\lambda\in\cX,~0\leq jp^r+1\leq p^n-1$.
There is a derivation $g_{\lambda,j}\in\Der(A)$ satisfying $g_{\lambda,j}(k\GG_{m(r)})=0$ and
$g_{\lambda,j}(x)=u_\lambda x^{jp^r+1}$.
\end{Lemma}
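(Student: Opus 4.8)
The plan is to exhibit $g_{\lambda,j}$ directly by prescribing its values on a generating set of $A$ and then invoking the universal property of derivations. Recall from the basis $\mathfrak{B}$ in (\ref{basis of A}) that $A$ is generated as a $k$-algebra by the orthogonal idempotents $\{u_\mu;~\mu\in\cX\}$ spanning $k\GG_{m(r)}$ together with the root vector $x$, and that these generators are subject to the relations
\begin{align*}
u_\mu u_\nu=\delta_{\mu,\nu}u_\mu,\qquad \sum_{\mu\in\cX}u_\mu=1,\qquad u_\mu x=xu_{\mu-\alpha},\qquad x^{p^n}=0.
\end{align*}
A dimension count (the monomials $u_\mu x^j$ with $0\leq j\leq p^n-1$ span the presented algebra) shows that these relations present $A$. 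I would therefore define a $k$-linear map on generators by $g_{\lambda,j}(u_\mu):=0$ for every $\mu\in\cX$ and $g_{\lambda,j}(x):=u_\lambda x^{jp^r+1}$, so that the requirement $g_{\lambda,j}(k\GG_{m(r)})=0$ and the prescribed value on $x$ hold by construction. Since a derivation is determined by its values on generators, it remains only to verify that this assignment is compatible with each defining relation; that is, applying the Leibniz rule formally to both sides of every relation must produce the same element of $A$.

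The idempotent and unit relations are settled at once: as $g_{\lambda,j}$ annihilates every $u_\mu$ and kills $1$, applying Leibniz to $u_\mu u_\nu=\delta_{\mu,\nu}u_\mu$ and to $\sum_\mu u_\mu=1$ gives $0=0$. The nilpotency relation is nearly as painless. Using the Leibniz identity $f(a^n)=\sum_{i=1}^{n}a^{i-1}f(a)a^{n-i}$ for derivations together with the commutation rule $x^{i-1}u_\lambda=u_{\lambda+(i-1)\alpha}x^{i-1}$ coming from Lemma \ref{multiplication rules in kG'}, one computes
\begin{align*}
g_{\lambda,j}(x^{p^n})=\sum_{i=1}^{p^n}x^{i-1}u_\lambda x^{jp^r+1}x^{p^n-i}=\sum_{i=1}^{p^n}u_{\lambda+(i-1)\alpha}x^{p^n+jp^r}=0,
\end{align*}
since $p^n+jp^r\geq p^n$ forces $x^{p^n+jp^r}=0$; thus nilpotency imposes no constraint.

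The crux is the twisted commutation relation $u_\mu x=xu_{\mu-\alpha}$, and this is where the precise exponent $jp^r+1$ is indispensable. Applying Leibniz to the left-hand side gives $g_{\lambda,j}(u_\mu x)=u_\mu g_{\lambda,j}(x)=\delta_{\mu,\lambda}u_\lambda x^{jp^r+1}$, while the right-hand side gives $g_{\lambda,j}(xu_{\mu-\alpha})=u_\lambda x^{jp^r+1}u_{\mu-\alpha}$. Pushing the idempotent past the powers of $x$ via $xu_\nu=u_{\nu+\alpha}x$ yields $x^{jp^r+1}u_{\mu-\alpha}=u_{\mu+jp^r\alpha}x^{jp^r+1}$, and because $\alpha$ has order $p^r$ (\cite[(2.1)]{FV00}) the shift $jp^r\alpha$ vanishes, so the right-hand side also equals $\delta_{\lambda,\mu}u_\lambda x^{jp^r+1}$; both sides agree. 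Conceptually this is no accident: the condition $p^r\mid\big((jp^r+1)-1\big)$ is exactly the statement that $u_\lambda x^{jp^r+1}\in\mathfrak{J}$ is a weight vector of weight $\alpha$ in the sense of (\ref{basis of weight alpha}), matching the weight of $x$, so that the prescription is forced to respect the $k\GG_{m(r)}$-module structure underlying the commutation relation. Having checked all relations, the universal property delivers a unique derivation $g_{\lambda,j}\in\Der(A)$ with the two stated properties. The one genuine obstacle is thus the compatibility with the twisted commutation, and it is resolved precisely by this weight bookkeeping — which also explains why the construction can only succeed for exponents of the shape $jp^r+1$.
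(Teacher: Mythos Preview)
Your proof is correct and follows essentially the same approach as the paper: prescribe $g_{\lambda,j}$ on generators and verify compatibility with the defining relations of $A$, with the nilpotency check $g_{\lambda,j}(x^{p^n})=0$ carried out by the identical computation. The paper is terser---it handles the twisted commutation implicitly via the weight condition $m\cdot g_{\lambda,j}(x)=\alpha(m)g_{\lambda,j}(x)$ (alluding to Lemma~\ref{derivation kG'} and the basis $\mathfrak{J}$) rather than by your direct idempotent-shuffling, and it does not spell out the presentation or the universal property---but the substance is the same.
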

\begin{proof}
When combined with Lemma \ref{derivation kG'},
the foregoing result implies that $g_{\lambda,j}(m\cdot x)=\alpha(m)g_{\lambda,j}(x)$ for all $m\in k\GG_{m(r)}$.
It suffices to verify that $g_{\lambda,j}(x^{p^n})=0$.
According to Lemma \ref{multiplication rules in kG'} we have
\begin{align*}
g_{\lambda,j}(x^{p^n})&=\sum\limits_{k=1}^{p^n}x^{k-1}g_{\lambda,j}(x)x^{p^n-k}=\sum\limits_{k}x^{k-1}u_\lambda x^{p^n-k+jp^r+1}\\
 &=\sum\limits_{k}u_{\lambda+(k-1)\alpha}x^{k-1}x^{p^n-k+jp^r+1}=0.
\end{align*}
\end{proof}

We let $\cL:=\HH^1(A,A)\cong\Der(A)/\IDer(A)$.
We determine a linear independent subset of $\Der(A)$ whose images in $\cL$ is a $k$-basis.
\begin{Lemma}\label{outer derivation}
With the notation of Lemma \ref{monomial derivation g_lambda j}.
We set $H:=\{g_{0,j};~0\leq jp^r+1\leq p^n-1\}$.
Then the set $H$ is linearly independent,
and its span $\mathcal{H}$ is a complement of $\IDer(A)$ in $\Der(A)$.
\end{Lemma}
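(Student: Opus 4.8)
The plan is to prove that the composite $\cH\hookrightarrow\Der(A)\tha\cL$ is an isomorphism, which at once yields both the linear independence of $H$ and the complement assertion. Linear independence is in fact the easy half: if $\sum_j c_jg_{0,j}=0$, then evaluating at $x$ gives $\sum_j c_ju_0x^{jp^r+1}=0$, and as the $u_0x^{jp^r+1}$ are pairwise distinct elements of the basis $\mathfrak{B}$, every $c_j$ vanishes. The same computation shows that $g\mapsto g(x)$ is injective on any space of derivations killing $k\GG_{m(r)}$.

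First I would isolate the space $\ccD_0\subseteq\Der(A)$ of \emph{normalized} derivations $g$ obeying condition (a) of Lemma \ref{derivation kG'}, that is $g(k\GG_{m(r)})=0$ with $g(x)$ lying in the weight-$\alpha$ space spanned by $\mathfrak{J}$. Such a $g$ is determined by $g(x)$ alone (its values on the $u_\lambda$ vanish), and Lemma \ref{monomial derivation g_lambda j} shows that every value in the weight-$\alpha$ space is attained; thus $g\mapsto g(x)$ identifies $\ccD_0$ with that weight space, the $g_{\lambda,j}$ corresponding to the basis $\mathfrak{J}$. Crucially, Lemma \ref{derivation kG'} gives $\Der(A)=\ccD_0+\IDer(A)$, so it remains only to understand the intersection $\ccD_0\cap\IDer(A)$.

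The heart of the argument is to pin down this intersection. An inner derivation $\ad a$ lies in $\ccD_0$ exactly when it annihilates $k\GG_{m(r)}$, i.e. when $a$ centralizes every $u_\mu$; using $u_\lambda x^j=x^ju_{\lambda-j\alpha}$ from Lemma \ref{multiplication rules in kG'} one checks that this forces $p^r\mid j$ in each monomial, so that $Z_A(k\GG_{m(r)})=\mathrm{span}\{u_\lambda x^{mp^r}\}$. For $a=u_\lambda x^{mp^r}$, part (2) of Lemma \ref{innder derivation of kG'} gives $\ad a(x)=(u_\lambda-u_{\lambda+\alpha})x^{mp^r+1}$. Since $\alpha$ generates $\cX\cong\ZZ/(p^r)$, the translations $\lambda\mapsto\lambda+\alpha$ form a single $p^r$-cycle, whence the differences $u_\lambda-u_{\lambda+\alpha}$ span exactly the codimension-one subspace of $\mathrm{span}\{u_\nu x^{mp^r+1};~\nu\in\cX\}$ on which the coefficients sum to zero. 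Power by power in $x^{jp^r+1}$, this identifies the image of $\ccD_0\cap\IDer(A)$ under $g\mapsto g(x)$ with the direct sum of these sum-zero hyperplanes.

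Finally I would assemble the pieces. For each admissible $j$ the value $g_{0,j}(x)=u_0x^{jp^r+1}$ has coefficient-sum one, hence is complementary to the corresponding hyperplane; under $g\mapsto g(x)$ this yields $\ccD_0=\cH\oplus(\ccD_0\cap\IDer(A))$. Consequently $\cH\cap\IDer(A)=\cH\cap(\ccD_0\cap\IDer(A))=0$, while $\ccD_0\subseteq\cH+\IDer(A)$ combined with $\Der(A)=\ccD_0+\IDer(A)$ forces $\Der(A)=\cH\oplus\IDer(A)$. I expect the main obstacle to be the middle step: correctly computing $Z_A(k\GG_{m(r)})$ and verifying that the differences $u_\lambda-u_{\lambda+\alpha}$ exhaust the sum-zero hyperplane for every surviving power of $x$, the cyclicity of $\cX$ being precisely what makes this work; the surrounding steps are bookkeeping with the multiplication rules and the index ranges of $\mathfrak{J}$.
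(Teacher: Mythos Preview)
Your argument is correct and follows essentially the same route as the paper. Both proofs first pass to derivations that vanish on $k\GG_{m(r)}$ (the paper via Lemma \ref{derivation kG'}, you via your space $\ccD_0$), then use the sum-zero hyperplane observation: the inner derivations $d_{\lambda,mp^r}$ land in $\ccD_0$ with $d_{\lambda,mp^r}(x)=(u_\lambda-u_{\lambda+\alpha})x^{mp^r+1}$, and since $\alpha$ generates $\cX$ these differences span precisely the coefficient-sum-zero subspace in each $x^{jp^r+1}$-level, leaving $g_{0,j}$ as a one-dimensional complement. The only difference is organizational: the paper invokes the precomputed description of $d(x)$ for arbitrary inner $d$ (Lemma \ref{innder derivation of kG'}(3)) to get $\cH\cap\IDer(A)=0$, whereas you compute $Z_A(k\GG_{m(r)})$ directly and deduce both trivial intersection and spanning from the single decomposition $\ccD_0=\cH\oplus(\ccD_0\cap\IDer(A))$.
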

\begin{proof}
The linear independence of the set $H$ follows from the fact that the set $\mathfrak{B}$ is a basis of $A$ (see (\ref{basis of A})).
Let $f\in\Der(A)$.
According to Lemma \ref{derivation kG'} and Lemma \ref{monomial derivation g_lambda j},
there exists a derivation $g$ such that $g$ is a linear combination of $\{g_{\lambda,j};~\lambda\in\cX,~0\leq jp^r+1\leq p^n-1\}$
and $f\equiv g~\pmod{\IDer(A)}$.
Moreover, Lemma \ref{innder derivation of kG'}(1)(2) ensures that
the space
$$\{\sum\limits_{i=0}^{p^r-1}a_ig_{i\alpha,j};~\sum_ia_i=0,~a_i\in k\}$$ is contained in $\IDer(A)$
for every $j$ with $0\leq jp^r+1\leq p^n-1$.
This yields a derivation $g'\in\mathcal{H}$ such that $f\equiv g'~\pmod{\IDer(A)}$.
It follows from Lemma \ref{innder derivation of kG'}(3) that $\mathcal{H}$ intersects $\IDer(A)$ trivially.
Hence $\mathcal{H}$ is a complement of $\IDer(A)$ in $\Der(A)$.
\end{proof}

\begin{Lemma}\label{Lie bracket formula}
Let $g_{0,i},g_{0,j}\in H$.
Then $[g_{0,i},g_{0,j}]=(j-i)g_{0,i+j}$.
\end{Lemma}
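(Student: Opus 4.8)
The plan is to exploit the fact recorded just before Lemma~\ref{multiplication rules in kG'}, namely that a derivation of $A$ is completely determined by its values on $k\GG_{m(r)}$ and on $x$. The commutator $[g_{0,i},g_{0,j}]$ is again a derivation, and since both $g_{0,i}$ and $g_{0,j}$ annihilate $k\GG_{m(r)}$ by Lemma~\ref{monomial derivation g_lambda j}, so does the bracket; the same is true of $(j-i)g_{0,i+j}$. Hence it suffices to check that the two derivations agree after evaluation at $x$, and the whole proof reduces to a single computation of $[g_{0,i},g_{0,j}](x)=g_{0,i}(g_{0,j}(x))-g_{0,j}(g_{0,i}(x))$.

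First I would compute $g_{0,i}\bigl(g_{0,j}(x)\bigr)$. Since $g_{0,j}(x)=u_0x^{jp^r+1}$ and $u_0\in k\GG_{m(r)}$ lies in the kernel of $g_{0,i}$, the Leibniz rule reduces this to $u_0\,g_{0,i}(x^{jp^r+1})$. Expanding $g_{0,i}(x^{jp^r+1})=\sum_{k=1}^{jp^r+1}x^{k-1}g_{0,i}(x)x^{jp^r+1-k}$ and inserting $g_{0,i}(x)=u_0x^{ip^r+1}$, I would push each idempotent to the left using the relation $u_\lambda x=xu_{\lambda-\alpha}$ of Lemma~\ref{multiplication rules in kG'}, which after iteration gives $x^{k-1}u_0=u_{(k-1)\alpha}x^{k-1}$. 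Each summand then collapses to $u_{(k-1)\alpha}x^{(i+j)p^r+1}$, so that $g_{0,i}(x^{jp^r+1})=\bigl(\sum_{l=0}^{jp^r}u_{l\alpha}\bigr)x^{(i+j)p^r+1}$.

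The decisive step is evaluating the idempotent sum. Because $\alpha$ has order $p^r$ in $\cX$ and $\sum_{\mu\in\cX}u_\mu=1$, every consecutive block of $p^r$ of the idempotents $u_{l\alpha}$ sums to the identity; of the $jp^r+1$ terms one obtains $j$ complete blocks together with the single leftover term $u_0$, whence $\sum_{l=0}^{jp^r}u_{l\alpha}=j\cdot 1+u_0$. Multiplying on the left by $u_0$ and using $u_0^2=u_0$ yields $g_{0,i}(g_{0,j}(x))=(j+1)u_0x^{(i+j)p^r+1}$. By the symmetric computation $g_{0,j}(g_{0,i}(x))=(i+1)u_0x^{(i+j)p^r+1}$, and subtracting gives $[g_{0,i},g_{0,j}](x)=(j-i)u_0x^{(i+j)p^r+1}=(j-i)g_{0,i+j}(x)$, which matches on the generator $x$ and hence on all of $A$.

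I expect the main obstacle to be the bookkeeping in this idempotent sum: obtaining the precise expression $j\cdot 1+u_0$, and thus the \emph{asymmetric} coefficients $j+1$ versus $i+1$ that produce the factor $j-i$, depends crucially on $\alpha$ having order exactly $p^r$ and on counting residues modulo $p^r$ correctly. A secondary point to dispatch is the boundary case $i+j>p^{n-r}-1$: there the exponent $(i+j)p^r+1$ reaches or exceeds $p^n$, so $x^{(i+j)p^r+1}=0$ and both sides vanish, and the identity persists under the convention that $g_{0,i+j}=0$ when its index falls outside the admissible range.
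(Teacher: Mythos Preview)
Your proof is correct and follows essentially the same approach as the paper's: both reduce to evaluating the bracket at $x$ (since both derivations kill $k\GG_{m(r)}$), expand via the Leibniz rule, and push the idempotents past powers of $x$ using Lemma~\ref{multiplication rules in kG'}. The only cosmetic difference is in the final bookkeeping: the paper multiplies by $u_0$ immediately and uses orthogonality $u_0u_{(k-1)\alpha}=\delta_{(k-1)\alpha,0}u_0$ to count the surviving terms as $(j+1)-(i+1)=j-i$, whereas you first collapse each block of $p^r$ idempotents to $1$ and then multiply by $u_0$; both yield the same coefficients, and your treatment of the boundary case $i+j>p^{n-r}-1$ is a point the paper leaves implicit.
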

\begin{proof}
Owing to Lemma \ref{monomial derivation g_lambda j},
the Lie bracket $[g_{0,i},g_{0,j}](k\GG_{m(r)})=0$ and
\begin{align*}
[g_{0,i},g_{0,j}](x)&=g_{0,i}(g_{0,j}(x))-g_{0,j}(g_{0,i}(x))\\
             &=g_{0,i}(u_0x^{jp^r+1})-g_{0,j}(u_0x^{ip^r+1})\\
             &=u_0(g_{0,i}(x^{jp^r+1})-g_{0,j}(x^{ip^r+1})\\
             &=u_0(\sum\limits_{k=1}^{jp^r+1}u_{(k-1)\alpha}x^{(i+j)p^r+1}-\sum\limits_{k=1}^{ip^r+1}u_{(k-1)\alpha}x^{(i+j)p^r+1})\\
             &=(j-i)u_0x^{(i+j)p^r+1}\\
             &=(j-i)g_{0,i+j}(x).
\end{align*}
The second to last equality follows from Lemma \ref{multiplication rules in kG'} and
the fact that $\{u_\lambda;~\lambda\in\cX\}$ are the
primitive orthogonal idempotents of $k\GG_{m(r)}$.
\end{proof}

In the last, we determine the restricted Lie algebra structure on $\cL$.
The proof is similar to Lemma \ref{Lie bracket formula} and will be omitted.
\begin{Lemma}\label{p-map}
Let $g_{0,j}\in H$.
Then $g_{0,0}^{[p]}=g_{0,0}$ and $g_{0,j}^{[p]}=0$ if $j\neq 0$.
\end{Lemma}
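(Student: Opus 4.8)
The plan is to compute the $p$-map $g_{0,j}^{[p]}$ directly from its definition as the class of the $p$-fold composite $g_{0,j}^{\circ p}$ in $\cL=\Der(A)/\IDer(A)$, in exact parallel with the way the bracket in Lemma \ref{Lie bracket formula} was obtained from $g_{0,i}\circ g_{0,j}-g_{0,j}\circ g_{0,i}$; this is legitimate because $\IDer(A)$ is a restricted ideal, so the $[p]$-operation on $\cL$ is induced by the associative $p$-th power in $\End_k(A)$. Since $g_{0,j}(k\GG_{m(r)})=0$ by Lemma \ref{monomial derivation g_lambda j}, every composite $g_{0,j}^{\circ s}$ again annihilates $k\GG_{m(r)}$, so $g_{0,j}^{\circ p}$ is once more a derivation of the type produced in Lemma \ref{derivation kG'} and is completely determined by its value on the generator $x$. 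Everything therefore reduces to evaluating $g_{0,j}^{\circ p}(x)$.

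First I would prove by induction on $s$ that
\[ g_{0,j}^{\circ s}(x)=\Big(\prod_{l=1}^{s-1}(lj+1)\Big)\,u_0\,x^{sjp^r+1}. \]
The base case $s=1$ is the defining equality $g_{0,j}(x)=u_0x^{jp^r+1}$. For the inductive step I apply $g_{0,j}$ to $u_0x^{sjp^r+1}$; since $g_{0,j}(u_0)=0$, the Leibniz rule gives $u_0\,g_{0,j}(x^{sjp^r+1})=u_0\sum_{k=1}^{sjp^r+1}x^{k-1}u_0x^{jp^r+1}x^{sjp^r+1-k}$. Using $x^{k-1}u_0=u_{(k-1)\alpha}x^{k-1}$ (Lemma \ref{multiplication rules in kG'}) and the orthogonality $u_0u_{(k-1)\alpha}=\delta_{0,(k-1)\alpha}u_0$, every surviving term contributes $u_0x^{(s+1)jp^r+1}$, and the number of indices $k\in\{1,\dots,sjp^r+1\}$ with $p^r\mid(k-1)$ is exactly $sj+1$; this is the same counting of multiples of $p^r$ that produced the coefficient $(j-i)$ in Lemma \ref{Lie bracket formula}. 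Hence the recursion $c_{s+1}=(sj+1)c_s$ with $c_1=1$, which yields the displayed product. Setting $s=p$ gives $g_{0,j}^{\circ p}(x)=\big(\prod_{l=1}^{p-1}(lj+1)\big)u_0x^{jp^{r+1}+1}$.

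Finally I would read off the two cases. For $j=0$ the coefficient is $1$ and the exponent is $1$, so $g_{0,0}^{\circ p}(x)=u_0x=g_{0,0}(x)$, giving $g_{0,0}^{[p]}=g_{0,0}$. For $j\neq 0$ I argue that $g_{0,j}^{\circ p}(x)=0$: when $p\nmid j$ the map $l\mapsto lj$ permutes the nonzero residues modulo $p$ and in particular takes the value $-1$, so one factor $lj+1$ vanishes modulo $p$ and the whole coefficient is $0$ in $k$; the only remaining possibility $p\mid j$ forces $j\geq p$, whence the exponent $jp^{r+1}+1$ exceeds the Loewy length $p^n$ of $A$ and $x^{jp^{r+1}+1}=0$. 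In either case $g_{0,j}^{\circ p}=0$, so $g_{0,j}^{[p]}=0$ in $\cL$. The delicate point is precisely the subcase $p\mid j$: here one must invoke the numerical relation between $n$ and the order $p^r$ of $\alpha$ coming from the finite-representation-type classification of \cite{FV00} (for instance a bound of the form $n\leq r+1$, under which $p\nmid j$ for every admissible $j\neq 0$ and the coefficient argument alone suffices). Verifying that such a bound genuinely holds—so that no admissible $g_{0,jp}$ can arise as $g_{0,j}^{\circ p}$ and spoil the conclusion—is the main obstacle, since $A$ is itself a serial algebra of finite representation type for every $n$ and $r$, and the constraint must be extracted from the group-scheme structure rather than from the representation type of $A$ alone.
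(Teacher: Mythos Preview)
Your computation is exactly the direct calculation the paper has in mind (it says only that the proof is ``similar to Lemma~\ref{Lie bracket formula}'' and omits it), and your inductive formula
\[
g_{0,j}^{\circ s}(x)=\Big(\prod_{l=1}^{s-1}(lj+1)\Big)\,u_0\,x^{sjp^r+1}
\]
is correct and proved by precisely the same idempotent-counting that gave the coefficient $(j-i)$ in Lemma~\ref{Lie bracket formula}. The cases $j=0$ and $p\nmid j$ then go through exactly as you wrote.

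Your worry about the subcase $p\mid j$, $j\ne 0$, is legitimate, and you should not look for a bound of the form $n\le r+1$: no such constraint comes from \cite{FV00}. Indeed the example in \S3.3 (where $r=1$ and $n$ is arbitrary) already shows that every pair $(n,r)$ with $n\ge r$ can occur. In that subcase your own formula gives $g_{0,j}^{\circ p}(x)=u_0x^{(pj)p^r+1}$, i.e.\ $g_{0,j}^{[p]}=g_{0,pj}$, and this is nonzero whenever $pj\le p^{n-r}-1$ (for instance $r=1$, $n\ge 4$, $j=p$). So the lemma as literally stated is slightly too strong. What your computation \emph{does} prove, and what is actually used in Theorem~\ref{main theorem 1} and the Remark following the lemma, is that $g_{0,0}^{[p]}=g_{0,0}$ and that every $g_{0,j}$ with $j\ne 0$ is $p$-\emph{nilpotent}: iterating yields $g_{0,j}^{[p]^s}=g_{0,p^sj}$, which vanishes once $p^sj>p^{n-r}-1$. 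This still makes $\sum_{j\ge 1}\cL_{[j]}$ a $p$-nilpotent ideal with $\cL_{[0]}$ a one-dimensional torus, so trigonalizability and $\mu(\cL)=1$ are unaffected. The right fix is therefore to weaken the assertion to $p$-nilpotence rather than to hunt for a numerical constraint that is not there.
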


\begin{Remark}
(1). Lemma \ref{Lie bracket formula}and Lemma \ref{p-map} imply that the space
$\mathcal{H}$ is a restricted Lie subalgebra of $\Der(A)$ with
$\Der(A)=\IDer(A)\oplus\mathcal{H}$.\\
(2). We still denote by $g_{0,j}$ its image in $\cL$.
There is a natural restricted $\mathbb{Z}$-grading on $\cL$:
$\cL=\sum_{j\geq 0}\cL_{[j]}$, where $\cL_{[j]}=kg_{0,j}$.
\end{Remark}

We summarize our main results in this subsection as follows:
\begin{Theorem}\label{main theorem 1}
Let $\cG$ be an infinitesimal group G of finite representation type.
Then the restricted Lie algebra $\cL=\HH^1(\BB_0(\cG),\BB_0(\cG))$ is trigonalizable,
and the one-dimensional space $\cL_{[0]}$ is a maximal torus of $\cL$.
In particular, $\cx_\cG(k)=\mu(\cL)=1$.
\end{Theorem}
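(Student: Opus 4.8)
The plan is to read the three assertions directly off the explicit presentation of $\cL$ obtained above. First I would record the identification $\HH^1(\BB_0(\cG),\BB_0(\cG))\cong\HH^1(A,A)=\cL$ for $A=k\cG'$. Since $\cM(\cG)$ is a linearly reductive normal subgroup, \cite[(1.1)]{Fa06} gives an isomorphism $\BB_0(\cG)\cong\BB_0(\cG')$ of principal blocks; and Lemma \ref{multiplication rules in kG'} shows that right multiplication by the root vector $x$ links the idempotents $u_\mu$ and $u_{\mu+\alpha}$, so that, as $\alpha$ generates $\cX\cong\ZZ/(p^r)$, all the $u_\lambda$ lie in one block and $A=k\cG'$ is connected, i.e.\ $A=\BB_0(\cG')$. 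Hence $\BB_0(\cG)\cong A$ as algebras, and their first Hochschild cohomologies agree as restricted Lie algebras, carrying the basis $\{g_{0,j}\}$ with $[g_{0,i},g_{0,j}]=(j-i)g_{0,i+j}$ (Lemma \ref{Lie bracket formula}), the $p$-operation $g_{0,0}^{[p]}=g_{0,0}$ and $g_{0,j}^{[p]}=0$ for $j\neq 0$ (Lemma \ref{p-map}), and the restricted $\ZZ$-grading $\cL=\bigoplus_{j\geq 0}\cL_{[j]}$, $\cL_{[j]}=kg_{0,j}$.

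The heart of the argument is the splitting $\cL=\cL_{[0]}\oplus\cL_+$, where $\cL_+:=\bigoplus_{j\geq 1}\cL_{[j]}$. By Lemma \ref{p-map} the element $g_{0,0}$ is toral, so $\cL_{[0]}=kg_{0,0}$ is a one-dimensional torus. The grading makes $\cL_+$ a restricted ideal: it is an ideal because $[\cL_{[i]},\cL_{[j]}]\subseteq\cL_{[i+j]}$, and it is $p$-closed because $\cL_{[j]}^{[p]}\subseteq\cL_{[pj]}$. As this grading is concentrated in positive degrees, \cite[(3.3)]{FV03} shows that $\cL_+$ is $p$-nilpotent, so $\mu(\cL_+)=0$. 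I would then conclude that $\cL_{[0]}$ is a \emph{maximal} torus: any torus $\ft\subseteq\cL$ meets the $p$-unipotent ideal $\cL_+$ trivially, so the projection $\cL\tha\cL/\cL_+\cong kg_{0,0}$ restricts to an injection on $\ft$ and forces $\dim_k\ft\leq 1$. Thus $\mu(\cL)=1$ and $\cL_{[0]}$ is maximal.

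For trigonalizability I would note that $\cL_+$ is a nilpotent ideal — iterated brackets strictly raise the degree in a grading bounded above — with abelian quotient $\cL/\cL_+\cong kg_{0,0}$, so $\cL$ is solvable. Relative to the maximal torus $\cL_{[0]}$ the root spaces are the $\cL_{[j]}$, and $[g_{0,0},g_{0,j}]=jg_{0,j}$ exhibits the weight of $\cL_{[j]}$ as $j\in\FF_p\subseteq k$. A solvable restricted Lie algebra all of whose toral roots lie in the prime field is trigonalizable — equivalently, its irreducible representations are all one-dimensional — which gives the first assertion (cf.\ \cite{SF}).

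Finally, for the complexity I would appeal to the representation theory of finite type rather than to the Lie structure. The block $\BB_0(\cG)$ is self-injective and of finite representation type, so every module is eventually $\Omega$-periodic and thus has bounded Betti numbers; hence $\cx_\cG(k)\leq 1$. Since $\BB_0(\cG)$ is not semisimple, the trivial module is non-projective and $\cx_\cG(k)\geq 1$, so $\cx_\cG(k)=1$. Combined with $\mu(\cL)=1$ this yields $\cx_\cG(k)=\mu(\cL)=1$. The step I expect to be the main obstacle is the maximality of $\cL_{[0]}$, i.e.\ excluding toral directions hidden inside $\cL_+$; the decisive input is the positivity of the restricted grading, which is exactly what makes the $p$-operation nilpotent on $\cL_+$.
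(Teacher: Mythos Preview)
Your proposal is correct and follows the same approach as the paper: read off the structure of $\cL$ from the explicit basis $\{g_{0,j}\}$ together with Lemmas \ref{Lie bracket formula} and \ref{p-map}. The paper's own proof is terse, simply citing \cite[(2.2),(2.4)]{FV00} for the identification $\BB_0(\cG)\cong k\cG'$ and \cite[(2.7)]{FV00} for $\cx_\cG(k)=1$; you instead obtain the former via \cite[(1.1)]{Fa06} plus a direct connectedness argument on the idempotents, and the latter via the standard periodicity argument for self-injective algebras of finite type. These are minor variations on the same proof, not genuinely different routes.
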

\begin{proof}
Recall that $\cG'=\cG/\cM(\cG)\cong\cU\rtimes\GG_{m(r)}$.
According to \cite[(2.2), (2.4)]{FV00}, there is an isomorphism $\BB_0(\cG)\cong k\cG'$.
The assertion now follows from Lemma \ref{Lie bracket formula} and Lemma \ref{p-map}
in conjunction with \cite[(2.7)]{FV00}.
\end{proof}

\subsection{Example}
Now, let $\fg:=kt\oplus(kx)_p$ be a restricted Lie algebra
with Lie bracket $[t,x]=x$ and the $p$-map $t^{[p]}=t$ and $x^{[p]^n}=0$ for some $0\neq n\in\NN$.
This is a representation-finite restricted Lie algebra (cf. \cite{Fa95}).
We denote by $\BB_0(\fg)$ the principal block of $U_0(\fg)$.

The following Lemma is a particular case Proposition \ref{abelian unipotent group}
\begin{Lemma}\label{h1 of kx/x^p^n}
Let $A:=k[X]/(X^{p^n})$ be the truncated polynomial ring and $\cL:=\HH^1(A,A)$.
Then exists a $p$-nilpotent ideal $\fn$ such that $\cL/\fn\cong W_1$.
In particular, $\mu(\cL)=1$.
\end{Lemma}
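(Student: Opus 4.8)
The plan is to read off the result from Proposition~\ref{abelian unipotent group}: its proof treats the general algebra $k[X_1,\ldots,X_m]/(X_i^{p^{a_i}})$, and $A=k[X]/(X^{p^n})$ is exactly the one-variable case $m=1$, $a_1=n$ (equivalently $A\cong k[\ZZ/(p^n)]=k\cU$ for the abelian unipotent group $\cU=\ZZ/(p^n)$, since $X^{p^n}-1=(X-1)^{p^n}$ in characteristic $p$, and here $\cx_\cU(k)=1$). With this identification Proposition~\ref{abelian unipotent group} directly furnishes a $p$-nilpotent ideal $\fn$ with $\cL/\fn\cong W_1$ and $\mu(\cL)=1$, so strictly the lemma is a specialization and the task reduces to spelling out that one-variable case.

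For completeness I would make the content explicit. As $A$ is commutative we have $\IDer(A)=0$, whence $\cL=\Der(A)$. Writing $x$ for the image of $X$ and $\partial:=\partial/\partial x$, every derivation has the form $f\partial$ with $f\in A$, the well-definedness relation $\partial(x^{p^n})=p^nx^{p^n-1}=0$ holding automatically in characteristic $p$; thus $\{x^j\partial;~0\le j\le p^n-1\}$ is a basis and a direct computation gives $[x^{i+1}\partial,x^{j+1}\partial]=(j-i)x^{i+j+1}\partial$. This equips $\cL$ with the restricted $\ZZ$-grading $\cL_{[d]}=kx^{d+1}\partial$ for $-1\le d\le p^n-2$, matching the grading used in the proof of Proposition~\ref{abelian unipotent group}.

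The substance, and the step I expect to require the most care, is the ideal $\fn:=\bigoplus_{d\ge p-1}\cL_{[d]}$, i.e.\ the span of $\{x^j\partial;~p\le j\le p^n-1\}$. The bracket formula shows $\fn$ is a Lie ideal, and since the $p$-map sends $\cL_{[d]}$ into $\cL_{[pd]}$ with $pd\ge p-1$ whenever $d\ge p-1$, the subspace $\fn$ is a restricted ideal; it is $p$-nilpotent because iterating the $p$-map multiplies the degree by $p$ and hence eventually exceeds the top degree $p^n-2$. The map sending $x^{j+1}\partial\mapsto x^{j+1}\partial$ for $j\le p-2$ and $x^{j+1}\partial\mapsto 0$ otherwise is then a well-defined isomorphism $\cL/\fn\cong W_1=\Der(k[X]/(X^p))$, the only boundary case $i=p-1,~j=-1$ causing no trouble because $j-i\equiv 0\pmod p$. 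Finally, a $p$-nilpotent ideal carries no nonzero torus, so $\mu(\cL)=\mu(W_1)=1$ by \cite[(3.3)]{FV03}.
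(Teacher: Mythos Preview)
Your proof is correct and matches the paper's approach exactly: the paper does not give a separate argument but simply states that the lemma is a particular case of Proposition~\ref{abelian unipotent group}, which is precisely what your first paragraph observes. Your explicit unpacking of the one-variable case---including the check that the boundary bracket $[x^p\partial,\partial]$ vanishes because $-p\equiv 0\pmod p$, which is what makes $\fn$ an ideal---is more detailed than what the paper provides, but entirely in line with it.
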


Recall that $\cG_\fg:=\Spec(U_0(\fg)^{*})$ is an infinitesimal group scheme of height $1$ such that $k\cG_\fg=U_0(\fg)$.
As a corollary, we readily obtain:
\begin{Corollary}\label{repfinite principal block Lie algebra}
Let $(\fg,[p])$ be a representation-finite restricted Lie algebra
and $\cL:=\HH^1(\BB_0(\fg),\BB_0(\fg))$. Then $\mu(\cL)=1$.
Moreover, the Lie algebra $\cL$ is solvable if and only if $\fg$ is not nilpotent.
\end{Corollary}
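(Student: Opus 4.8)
The plan is to transfer everything to the height-one infinitesimal group scheme $\cG_\fg$, for which $k\cG_\fg=U_0(\fg)$, and then to split according to whether the multiplicative part survives in the quotient $\cG_\fg/\cM(\cG_\fg)$. Since $\fg$ is representation-finite, the algebra $U_0(\fg)=k\cG_\fg$ has finite representation type, so $\cG_\fg$ is an infinitesimal group of finite representation type and $\BB_0(\fg)=\BB_0(\cG_\fg)$; as in the standing (non-semisimple) setting, the relevant complexity is $1$. Both assertions will then drop out of a single case distinction applied to the structure theorem $\cG_\fg/\cM(\cG_\fg)\cong\cU\rtimes\GG_{m(r)}$ (\cite[(2.7)]{FV00}).

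The decisive point I would establish first is that $\fg$ is nilpotent if and only if $r=0$, i.e.\ if and only if the torus $\GG_{m(r)}$ is trivial. If $\fg$ is nilpotent then $\cG_\fg$ is a nilpotent infinitesimal group, and by the general theory recalled around Corollary \ref{nilpotent principle block} its largest diagonalizable normal subgroup is $\cM(\cG_\fg)$ with unipotent quotient $\cG_\fg/\cM(\cG_\fg)=\cU$, forcing $r=0$. Conversely, when $r=0$ the group $\cG_\fg$ is a central extension of the unipotent group $\cU$ by the central diagonalizable group $\cM(\cG_\fg)$, and is therefore nilpotent. This equivalence is exactly what couples the Lie-theoretic hypothesis on $\fg$ to the structural parameter $r$ that governs the shape of $\cL$.

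Granting this, I would finish by inspecting the two regimes. If $\fg$ is not nilpotent then $r\ge 1$ and Theorem \ref{main theorem 1} applies verbatim: it yields $\mu(\cL)=1$ together with the trigonalizability of $\cL$, and the latter entails solvability. Concretely this is visible from Lemma \ref{Lie bracket formula} and Lemma \ref{p-map}, which present $\cL$ on the basis $\{\,g_{0,j}\mid 0\le j\,\}$ with $[g_{0,i},g_{0,j}]=(j-i)g_{0,i+j}$; since every grading index is non-negative, $\fn:=\bigoplus_{j\ge 1}kg_{0,j}$ is an ideal with one-dimensional quotient $\cL/\fn=kg_{0,0}$, and $\fn$ is nilpotent because the bracket strictly raises the bounded $\ZZ$-grading, whence $\cL$ is solvable. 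If instead $\fg$ is nilpotent then $r=0$ and $\BB_0(\fg)\cong k\cU$ with $\cU$ an abelian (indeed $V$-uniserial) nontrivial unipotent group; Proposition \ref{abelian unipotent group} then produces a $p$-nilpotent ideal $\fn$ with $\cL/\fn\cong W_1$ (the complexity $\cx_\cU(k)$ being $1$ since $\cU$ is representation-finite and nontrivial). In particular $\mu(\cL)=\mu(W_1)=1$, and since $W_1$ is simple of dimension $p\ge 3$ it is not solvable, so $\cL$ is not solvable. Assembling the two cases gives $\mu(\cL)=1$ together with the equivalence that $\cL$ is solvable precisely when $\fg$ is not nilpotent.

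I expect the main obstacle to be the equivalence between nilpotency of $\fg$ and the vanishing of $r$, which rests on the splitting of a nilpotent infinitesimal group scheme into its unipotent and its central diagonalizable part; once this is in place, the remainder is a reading-off of already-established structure. A secondary subtlety, worth flagging in the write-up, is that the explicit Witt-type description behind Lemma \ref{Lie bracket formula} and Lemma \ref{p-map} is only available when $r\ge 1$, since it rests on the nonzero root $\alpha$; thus the nilpotent case genuinely cannot be treated through that model and must instead be routed through Proposition \ref{abelian unipotent group}. Keeping the two regimes strictly apart is what makes the equivalence come out, as they return opposite verdicts --- a solvable algebra in the non-nilpotent case, and one with a simple quotient $W_1$ in the nilpotent case.
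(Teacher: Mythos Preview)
Your proof is correct and follows essentially the same architecture as the paper's: split on whether $\fg$ is nilpotent, and in the two cases invoke Theorem \ref{main theorem 1} (non-nilpotent) and Proposition \ref{abelian unipotent group} or its special case Lemma \ref{h1 of kx/x^p^n} (nilpotent) to read off both $\mu(\cL)=1$ and the solvability verdict.

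The only difference is how the case split is set up. The paper works at the Lie-algebra level, invoking the classification \cite[(4.3)]{Fa95} to write $\fg=kt\oplus T(\fg)\oplus(kx)_p$ and then distinguishing $t=0$ versus $t\neq 0$; in the nilpotent case this immediately gives $\BB_0(\fg)\cong k[X]/(X^{p^n})$, and in the other case it identifies $\fg/T(\fg)\cong kt\oplus(kx)_p$. You instead phrase everything in terms of the group scheme $\cG_\fg$ and the parameter $r$ in $\cG_\fg/\cM(\cG_\fg)\cong\cU\rtimes\GG_{m(r)}$, arguing that $\fg$ is nilpotent iff $r=0$. Your direction ``$r=0\Rightarrow\fg$ nilpotent'' quietly uses that a $p$-nilpotent restricted Lie algebra is nilpotent (via Engel, since $(\ad x)^{p^n}=\ad(x^{[p]^n})=0$), which is fine. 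The paper's route is a bit more explicit and avoids that side remark, while yours is more uniform with the group-scheme framework already used for Theorem \ref{main theorem 1}; both land in the same place.
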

\begin{proof}
According to \cite[(4.3)]{Fa95},
there exist a toral element $t\in\fg$ and a $p$-nilpotent element $x\in N(\fg)$ such that
$\fg=kt\oplus N(\fg)=kt\oplus T(\fg)\oplus (kx)_p$ ($\ast$).\label{structure of finite Lie algebra}
Here, the toral element $t$ can be equal to $0$.

If $\fg$ is nilpotent,
then \cite[(4.3)(b)]{Fa95} and \cite[(1.1)]{Fa06} yield an isomorphism
$$\BB_0(\fg)\cong\BB_0((kx)_p)\cong U_0((kx)_p).$$
Since $x$ is $p$-nilpotent, $U_0((kx)_p)\cong k[X]/(X^{p^n})$ for some integer $n$.
Thanks to Lemma \ref{h1 of kx/x^p^n}, the Lie algebra $\HH^1(\BB_0(\fg))$ is not solvable.

If $\fg$ is not nilpotent, then $t\neq 0$ in ($\ast$).
Since $\fg$ is supersolvable (cf. \cite[(2.1)]{FV00}),
the factor Lie algebra $\fg':=\fg/T(\fg)$ is trigonalizable (cf. \cite[(2.3)]{FV00}).
Recall that $\BB_0(\fg)\cong\BB_0(\fg')$,
our assumption now implies that $\fg'\cong kt\oplus (kx)_p$.
Then Theorem \ref{main theorem 1} implies that $\cL$ is trigonalizable,
so that $\cL$ is solvable.

In the last,
Lemma \ref{h1 of kx/x^p^n} and Proposition \ref{main theorem 1} imply that $\mu(\cL)=1$.
\end{proof}
\begin{Remark}
Suppose that $\fg$ is not nilpotent.
A consecutive application of \cite[(2.2)]{Fa96} and \cite[(3.2)]{Fa95} shows that $\BB_0(\fg')=U_0(\fg')$ is a Nakayama algebra
and $\BB_0(\fg')$ possesses exactly $p$ simple one-dimensional modules $k_\lambda;~0\leq \lambda\leq p-1$.
We apply \cite[(3.2)]{Fa95} to see that
$\Ext_{U_0(\fg')}^1(k_\lambda,k_\lambda)=(0)$
and $\dim_k\Ext_{U_0(\fg')}^1(k_\lambda,k_\mu)\leq 1$ for any $\lambda,\mu\in\{0,1,\cdots,p-1\}$.
The solvability of $\cL$ also follows from \cite[(3.1)]{LR2}.
However, we can not determine the maximal toral rank $\mu(\cL)$ from the result in \cite{LR2}.
\end{Remark}

\section{Application to blocks of Frobenius kernels of smooth groups}
\subsection{Trivial extension}
We denote by $T(\Kr):=\Kr\ltimes\Kr^*$ the trivial extension of the path algebra $\Kr$ of the Kronecker quiver.
It is well-known that $T(\Kr)$ has a bound quiver presentation with quiver
\begin{center}
\begin{picture}(50,70)
\put(13,19){$1$}
\put(23,32){$\stackrel{x_1}{\longrightarrow}$}
\put(23,21){$\stackrel{y_1}{\longrightarrow}$}
\put(23,10){$\stackrel{x_2}{\longleftarrow}$}
\put(23,0){$\stackrel{y_2}{\longleftarrow}$}
\put(45,19){$2$}
\end{picture}
\end{center}
and relations $x_1y_2=y_1x_2$,
$y_2x_1=x_2y_1$,
$x_2x_1=x_1x_2=y_1y_2=y_2y_1=0$.
The vector space structure of $T(\Kr)$ is that of $\Kr\oplus\Kr^*$ and the multiplication is defined by
$(a,f)(b,g)=(ab,ag + fb)$ for any $a,b\in\Kr~f,g\in\Kr^*$,
here we consider $\Kr^*$ the $\Kr$-$\Kr$-bimodule.
\begin{Lemma}\label{trivial extension}
Let $A:=T(\Kr)$ and $\cL:=\HH^1(A,A)$.
Then $\cL$ is the one-dimensional toral central extension of $\mathfrak{sl}_2$.
In particular, $\mu(\cL)=2$.
\end{Lemma}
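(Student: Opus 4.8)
The plan is to compute $\cL=\HH^1(A,A)$ directly from the given bound quiver presentation of $A=T(\Kr)$ and then read off its Lie and restricted structure. First I would fix the basis $\{e_1,e_2,x_1,y_1,x_2,y_2,\omega_1,\omega_2\}$ of the $8$-dimensional algebra $A$, where $e_1,e_2$ are the vertex idempotents, the $x_i,y_i$ are the arrows, and $\omega_1:=y_2x_1=x_2y_1$, $\omega_2:=x_1y_2=y_1x_2$ span the socle $J(A)^2$ (so that $J(A)^3=0$). Since the subalgebra $E:=ke_1\oplus ke_2$ is separable, one has $\HH^1(A,A)\cong\Der_E(A)/\IDer_E(A)$, where $\Der_E(A)$ consists of the $E$-linear derivations (those vanishing on $E$) and $\IDer_E(A)=\{\ad a;~a\in A^E\}$ with $A^E=e_1Ae_1\oplus e_2Ae_2$. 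This reduces the whole problem to a finite linear computation, because an $E$-linear derivation is determined by its values on the arrows.

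Next I would determine $\Der_E(A)$. An $E$-linear derivation must send each arrow into the span of the parallel arrows, so $D(x_1),D(y_1)\in\langle x_1,y_1\rangle$ and $D(x_2),D(y_2)\in\langle x_2,y_2\rangle$, giving eight scalar parameters. Imposing $D(r)=0$ on the six defining relations $x_1y_2-y_1x_2$, $y_2x_1-x_2y_1$, $x_2x_1$, $x_1x_2$, $y_1y_2$, $y_2y_1$ yields exactly three independent linear constraints, whence $\dim_k\Der_E(A)=5$. Conceptually these constraints express that $D$ arises from the natural action of $\fgl_2$ on $\langle x_1,y_1\rangle$ together with the contragredient action on $\langle x_2,y_2\rangle$ (with respect to the pairing encoded by $x_1y_2=y_1x_2$ and $y_2x_1=x_2y_1$), plus one extra derivation $D_5$ acting as the identity on $\langle x_2,y_2\rangle$ and by $0$ on $\langle x_1,y_1\rangle$. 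A short bracket computation shows that $D_5$ is central in $\Der_E(A)$ and that $M\mapsto D_M$ is a Lie algebra embedding of $\fgl_2$; thus $\Der_E(A)\cong\fgl_2\oplus kD_5$ with $D_5$ central.

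Then I would compute the inner part $\IDer_E(A)$. Since $\omega_1,\omega_2$ lie in the socle we get $\ad\omega_i=0$, while $\ad e_1=-\ad e_2$, so $\IDer_E(A)=k\,\ad e_1$ is one-dimensional. Evaluating $\ad e_1$ on the arrows shows that it equals $-D_I$, where $D_I=D_M$ is the derivation attached to the scalar matrix $I\in\fgl_2$. Because $p\geq 3$ gives the splitting $\fgl_2=\fsl_2\oplus kI$, passing to the quotient by $k\,\ad e_1=kD_I$ collapses the central line $kI$ and leaves $\cL\cong\fsl_2\oplus k\bar D_5$ with $\bar D_5$ central; in particular $\cL$ is a one-dimensional central extension of the simple Lie algebra $\fsl_2$. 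For the restricted structure I would use again that an $E$-linear derivation is determined on arrows to check $D_M^{[p]}=D_{M^p}$, so the copy of $\fsl_2$ carries the standard restricted structure ($e^{[p]}=f^{[p]}=0$, $h^{[p]}=h$); and $D_5$, being semisimple with eigenvalues $0$ and $1$ on the chosen basis of $A$, satisfies $D_5^{[p]}=D_5$, so $\bar D_5$ is toral. This identifies $\cL$ as the asserted \emph{toral} one-dimensional central extension. Finally, since $\bar D_5$ is central the $[p]$-map is additive across the decomposition, so $\cL\cong\fsl_2\oplus k\bar D_5$ as restricted Lie algebras with the second summand a $1$-dimensional torus, and therefore $\mu(\cL)=\mu(\fsl_2)+1=1+1=2$.

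The main obstacle is the bookkeeping in the two middle steps: correctly imposing the six relations while respecting the \emph{swapped} pairing $x_1\leftrightarrow y_2$, $y_1\leftrightarrow x_2$ dictated by the socle relations, and then recognizing that the unique nonzero inner derivation $\ad e_1$ is precisely the scalar-matrix derivation $D_I$, so that the quotient turns $\fgl_2$ into $\fsl_2$ and not something larger. Once $D_M^{[p]}=D_{M^p}$ is established, verifying that the $p$-operation descends as claimed and extracting $\mu(\cL)=2$ is routine.
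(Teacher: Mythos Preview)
Your argument is correct and complete; the computation of $\Der_E(A)$, the identification of $\IDer_E(A)=k\,\ad e_1=kD_I$, and the restricted structure all check out. The one place to be a little careful is the choice of section $\fgl_2\hookrightarrow\Der_E(A)$: the na\"ive ``act by $-M$ on $\langle x_2,y_2\rangle$'' is \emph{not} a Lie homomorphism, but the contragredient choice you describe (with respect to the swapped pairing $x_1\!\leftrightarrow\! y_2$, $y_1\!\leftrightarrow\! x_2$), namely $\alpha=-d$, $\delta=-a$, $\beta=-b$, $\gamma=-c$, does give $[D_M,D_N]=D_{[M,N]}$, and your later use of $D_M^{[p]}=D_{M^p}$ is consistent with this section.

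Your route differs from the paper's. The paper does not compute $\Der_E(A)$ from the quiver; instead it quotes from the literature that $\HH^1(\Kr,\Kr)\cong\fsl_2$ and $\HH^1(T(\Kr),T(\Kr))\cong\fgl_2$ as Lie algebras, and then only needs to pin down the restricted structure of the central direction. It does this by writing down the projection $d:(a,f)\mapsto(0,f)$ on $A=\Kr\ltimes\Kr^*$, checking by hand that $d$ is a non-inner derivation with $d^{[p]}=d$, and concluding $\cL\cong\fsl_2\oplus kd$ with $d$ toral, hence $\mu(\cL)=\mu(\fsl_2)+1=2$. In fact your $D_5$ \emph{is} the paper's $d$: under the identification of $\Kr^*$ with $\langle x_2,y_2,\omega_1,\omega_2\rangle$, the projection onto the bimodule summand acts exactly as the identity on $x_2,y_2,\omega_1,\omega_2$ and by zero on $e_1,e_2,x_1,y_1$. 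So the two proofs isolate the same toral element; the paper's approach is shorter because it outsources the $\fgl_2$-identification, while yours is self-contained and makes the quiver-level structure (the $\fgl_2$-action on arrows and its contragredient) fully explicit.
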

\begin{proof}
It is well-known (cf. \cite[(5.5)]{CSS18}) that $\HH^1(\Kr,\Kr)\cong\mathfrak{sl}_2$ and $\cL\cong\mathfrak{gl}_2$,
i.e., $\cL$ is a one-dimensional extension of $\mathfrak{sl}_2$.
We consider the projection map $d:A\rightarrow A;~(a,f)\mapsto(0,f)$.
Clearly, $d^{[p]}=d$.
Moreover, we have
\begin{align*}
d((a,f)(b,g))=&d(ab,ag + fb)=(0,ag+fb)\\
              =&(0,f)(b,g)+(a,f)(0,g)\\
              =&d(a,f)(b,g)+(a,f)d(b,g).
\end{align*}
Consequently, $d\in\Der(A)$. Direct computation shows that $d\notin\IDer(A)$,
so that $\cL\cong\mathfrak{gl}_2\cong\mathfrak{sl}_2\oplus kd$ and $\mu(\cL)=\mu(\mathfrak{sl}_2)+1=2.$
\end{proof}
\begin{Remark}
Since every derivation of the Lie algebra $\mathfrak{sl}_2$ is inner,
it follows that each central extension of $\mathfrak{sl}_2$ splits.
To compute the maximal toral rank,
it depends on the restricted structure of $\mathfrak{gl}_2$.
\end{Remark}

\subsection{Blocks of Frobenius kernels of smooth groups}
Recall that an algebraic group $\cG$ is \textit{smooth} if its coordinate algebra $k[\cG]$ is reduced.
We denote by $k\cG_r$ the group algebra (algebra of distributions) of its $r$-th Frobenius kernel $\cG_r$.
Let $\BB\subseteq k\cG_r$ be a block algebra and we define its complexity $\cx(\BB)$ as the maximum of the complexities of all simple $\BB$-modules.
\begin{Theorem}\label{theorem 2}
Let $\cG$ be a smooth algebraic group with unipotent radical $\cU$ of dimension $n=\dim\cU$,
$\BB\subseteq k\cG_r$ a block algebra and $\cL:=\HH^1(\BB,\BB)$.
Then the following statements hold:
\begin{enumerate}
\item[(1)] If $\BB$ is representation-finite, then $\cL=(0)$ is the trivial Lie algebra,
or $r=1$ and $\cL$ is isomorphic to the Lie algebra in Lemma \ref{h1 of kx/x^p^n},
or the Lie algebra in Theorem \ref{main theorem 1}.\\
\item[(2)] If $\BB$ is tame, then $\cL\cong\mathfrak{gl}_2$ the toral central extension of $\mathfrak{sl}_2$.\\
\item[(3)] In both (1) and (2), we have $\cx(\BB)=\mu(\cL)$.
\end{enumerate}
\end{Theorem}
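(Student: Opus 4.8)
The plan is to compute $\cL=\HH^1(\BB,\BB)$ by reducing to the model algebras already analysed in Sections~2 and~3. The decisive observation is that $\HH^1$, together with its restricted Lie algebra structure, is invariant under Morita equivalence (as a Gerstenhaber algebra by \cite{GS}, and as a restricted Lie algebra by \cite{Zimm}, this being even a derived invariant). Hence it suffices to identify the Morita type of the block $\BB$, and the whole theorem becomes a matter of classifying the possible Morita types of a representation-finite or tame block of $k\cG_r$ and then quoting the corresponding earlier result together with its value of $\mu$.

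First I would treat (1). Using the classification of representation-finite infinitesimal groups from \cite{FV00}, combined with the decomposition of $\cG$ into its unipotent radical $\cU$ and a reductive complement (which passes to $\cG_r$) and a Clifford-theoretic reduction of an arbitrary block to a principal block, I would show that a representation-finite block $\BB$ is Morita equivalent to exactly one of three algebras: a simple algebra; the truncated polynomial ring $k[X]/(X^{p^n})$; or the Nakayama algebra $U_0(kt\oplus(kx)_p)$ occurring in Theorem~\ref{main theorem 1}. In the first case $\BB$ is semisimple and $\cL=(0)$. In the second case the block is local with a single simple module, which forces $r=1$ (the height-one, restricted Lie algebra situation $U_0((kx)_p)\cong k[X]/(X^{p^n})$), and the exponent is governed by $n=\dim\cU$; Lemma~\ref{h1 of kx/x^p^n} then gives $\cL/\fn\cong W_1$ for a $p$-nilpotent ideal $\fn$, and $\mu(\cL)=1$. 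In the remaining case a non-trivial torus $\GG_{m(r)}$ survives, and Theorem~\ref{main theorem 1} yields the trigonalizable Lie algebra with $\cL_{[0]}$ a maximal torus.

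For (2) I would invoke the classification of tame blocks of Frobenius kernels of smooth groups to conclude that such a block is domestic and Morita equivalent to the trivial extension $T(\Kr)$ of the Kronecker algebra (the $\mathfrak{sl}_2$-type situation modelled on the principal block of $U_0(\mathfrak{sl}_2)$). Lemma~\ref{trivial extension} then identifies $\cL\cong\mathfrak{gl}_2$, the one-dimensional toral central extension of $\mathfrak{sl}_2$, with $\mu(\cL)=2$. Part (3) follows once the Morita type is pinned down, by computing $\cx(\BB)$ directly and matching it against $\mu(\cL)$: a semisimple block has projective simple modules, so $\cx(\BB)=0=\mu((0))$; the non-semisimple representation-finite blocks are self-injective with periodic simple modules, so $\cx(\BB)=1$, matching the values $\mu(\cL)=1$ supplied by Lemma~\ref{h1 of kx/x^p^n} and Theorem~\ref{main theorem 1}; and the tame (domestic) block has $\cx(\BB)=2=\mu(\mathfrak{gl}_2)$ by Lemma~\ref{trivial extension}. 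In every case $\cx(\BB)=\mu(\cL)$.

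The main obstacle is the classification step itself: reducing an arbitrary block $\BB$ of $k\cG_r$ to one of the four listed model algebras up to Morita equivalence. This requires the complete representation-type dichotomy for Frobenius kernels of smooth groups, the passage from an arbitrary block to a principal block, and careful bookkeeping of how the dimension $n=\dim\cU$ and the level $r$ single out which model occurs. Once this structural input is in place, the determination of $\cL$ and of $\mu(\cL)$ is purely a matter of quoting the results of the previous sections.
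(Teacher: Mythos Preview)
Your approach is essentially the same as the paper's: reduce to the Morita type of $\BB$, then quote Lemma~\ref{h1 of kx/x^p^n}, Theorem~\ref{main theorem 1}, and Lemma~\ref{trivial extension}. The only difference is that the paper does not reconstruct the classification step from \cite{FV00} plus Clifford theory as you sketch, but simply invokes \cite[Theorem~3.1]{Fa05} for the representation-finite case and \cite[Theorem~4.6]{Fa05} for the tame case as black boxes.
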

\begin{proof}
Let $\BB\subseteq k\cG_r$ be a block of finite representation.
Thanks to \cite[Theorem 3.1]{Fa05}, $\BB$ is simple, or $\BB$ is Morita equivalent to $k[X]/(X^{p^n})$ or the group algebra of $\cU\rtimes\GG_{m(1)}$ for some
V-uniserial normal subgroup $\cU$. Hence (1) follows from Lemma \ref{h1 of kx/x^p^n} and Theorem \ref{main theorem 1}.
Our second assertion is a direct consequence of Lemma \ref{trivial extension} and \cite[Theorem 4.6]{Fa05}.
\end{proof}
\begin{Remark}
Given a finite group scheme $\cG$.
Let $\BB\subseteq k\cG$ be a block algebra of the group algebra $k\cG$ and $\cL:=\HH^1(\BB,\BB)$.
In view of Proposition \ref{abelian unipotent group}, Theorem \ref{main theorem 1} and Theorem \ref{theorem 2},
we conjecture that $\cx(\BB)=\mu(\cL)$.
\end{Remark}
\noindent
\textbf{Acknowledgments.}
This work is supported by the NSFC (No. 11801204), NSF of Hubei Province (No. 2018CFB391), and the
Fundamental Research Funds for the Central Universities (No. CCNU18QN033).
I would like to thank Rolf Farnsteiner for his helpful comments.


\begin{thebibliography}{}
\bibitem{AUS} M. Auslander, I. Reiten, S. Smal$\phi$, {\em Representation Theory of Artin Algebras}. Cambridge Studies Adv. Math. {\bf 36}, Cambridge Uni. Press, 1995.
\bibitem{CSS18} C. Chaparro, S. Schroll, A. Solotar, {\em On the Lie algebra structure of the first Hochschild cohomology of gentle algebras and Brauer graph algebras}. arXiv:1811.02211.
\bibitem{DG70} M. Demazure and P. Gabriel, {\em Groupes Alg\'ebriques}. North-Holland, 1970.
\bibitem{Drozd} Yu. Drozd, {\em Tame and Wild Matrix Problems}, in Rpreseentation Theory II, Lect. Notes Math. {\bf 832} (1980), 242--258.
\bibitem{ER} F. Eisele, T. Raedschelders, {\em On the solvability of the first Hochschild cohomology of a finite-dimensional algebra}. arXiv:1903.07380.
\bibitem{Erd} K. Erdmann, {\em Blocks of Tame Representation Type and Related Algebras}. Springer Lecture Notes in Mathematics, {\bf 1428}, 1990.
\bibitem{Fa95} R. Farnsteiner, {\em Periodicity and representation type of modular Lie algebras}. J. reine angew. Math. {\bf 464} (1995), 47--65.
\bibitem{Fa96} R. Farnsteiner, {\em Representations of Blocks Associated to Induced Modules of Restricted Lie Algebras}. Math. Nachr. {\bf 179} (1996), 57--88.
\bibitem{Fa05} R. Farnsteiner, {\em Block representation type of Frobenius kernels of smooth groups}. J. reine angew. Math. {\bf 586} (2005), 45--69.
\bibitem{Fa06} R. Farnsteiner, {\em Polyhedral groups, McKay quivers, and the finite algebraic groups with tame principal blocks}. Invent. math. \textbf{166} (2006), 27--94.
\bibitem{Fa17} R. Farnsteiner, {\em Representations of finite group schemes and morphisms of projective varieties}. Proc. London Math. Soc. \textbf{114} (2017), 433--475.
\bibitem{FV00} R. Farnsteiner, D. Voigt, {\em On cocommutative Hopf algebras of finite representation type}. Adv. in Math. {\bf 155} (2000), 1--22.
\bibitem{FV03} R. Farnsteiner, D. Voigt, {\em On infinitesimal groups of tame representation type}. Math. Z. {\bf 244} (2003), 479--513.
\bibitem{FMS} D. Fischman, S. Montgomery, H. Schneider, {\em Frobenius extensions of subalgebras of Hopf algebras}. Trans. Amer. Math. Soc. {\bf 349} (1996), 4857--4895.
\bibitem{GS} M. Gerstenhaber, S. D. Schack, {\em Algebraic cohomology and deformation theory}
in "Deformation theory of algebras and structures and applications" (M. Hazewinkel
and M. Gerstenhaber, Eds.), NATO Advanced Science Institutes Series C: Mathematical
and Physical Sciences, Vol. {\bf 247} (Kluwer Academic Publishers, Dodrecht/Boston,
1988), pp. 11--264.
\bibitem{Ho} G. Hochschild, {\em On the Cohomology Groups of an Associative Algebra}. Ann. of Math. {\bf 46} (1945), 58--67.
\bibitem{Ja06} J. Jantzen,  {\em Representations of Algebraic Groups}. Mathematical Surveys and Monographs \textbf{107}. American Math. Society, Providence, 2006.
\bibitem{LR1} M. Linckelmann, L. Rubio y Degrassi, {\em Block algebras with $\HH\HH^1$ a simple Lie algebra}. Quart. J. Math. {\bf 69} (2018), 1123--1128.
\bibitem{LR2} M. Linckelmann, L. Rubio y Degrassi, {\em On the Lie algebra structure of $\mathrm{HH}^1(A)$ of a finite-dimensional algebra $A$}. arXiv:1903.08484.
\bibitem{Mo} S. Montgomery, {\em Hopf Algebras and their Actions on Rings}. CBMS {\bf 82}, 1993.
\bibitem{RSS} L. Rubio y Degrassi, S. Schroll, A. Solotar, {\em The first Hochschild cohomology as a Lie algebra}. arXiv:1903.12145.
\bibitem{SF} H. Strade and R. Farnsteiner, \textit{Modular Lie Algebras and their Representations}. Pure and Applied Mathematics \textbf{116}. Marcel Dekker, 1988.
\bibitem{Vo} D. Voigt, {\em Induzierte Darstellungen in der Theorie der endlichen, algebraischen Gruppen}. Lecture Notes in Math. \textbf{592}. Springer-Verlag, 1977.
\bibitem{Wa79} W. Waterhouse, {\em Introduction to Affine Group Schemes}. Graduate Texts in Mathematics \textbf{66}. Springer-Verlag, 1979.
\bibitem{Wei} C. A. Weibel, {\em An introduction to homological algebra}. Cambridge Studies Adv. Math. {\bf 38}, Cambridge Uni. Press, Cambridge, 1994.\
\bibitem{Zimm} A. Zimmermann. {\em Fine Hochschild invariants of derived categories for symmetric algebras}. J. Algebra, {\bf 308} (2007), 350--367.
\end{thebibliography}
\end{document}